\renewcommand*{\backref}[1]{}
\renewcommand*{\backrefalt}[4]{%
	\ifcase #1 Not cited.%
	\or        Cited on page~#2.%
	\else      Cited on pages~#2.%
	\fi}
\newcolumntype{H}{>{\setbox0=\hbox\bgroup}c<{\egroup}@{}}
\newcommand{\repourl}{\url{https://github.com/amato-gianluca/weirds}}
\newcommand{\eqdef}{:=}
\newcommand{\sigmam}{\sigma_{-1}}
\newtheorem{thm}{Theorem}[section]
\newtheorem*{thm-parma}{Theorem 3.1 from \cite{AHMPWN}}
\newtheorem*{thm*}{Theorem}
\newtheorem{lem}[thm]{Lemma}
\newtheorem{cor}[thm]{Corollary}
\newtheorem{conj}[thm]{Conjecture}
\newtheorem{prop}[thm]{Proposition}
\theoremstyle{definition}
\newtheorem{defin}[thm]{Definition}
\newtheorem{example}[thm]{Example}
\theoremstyle{remark}
\newtheorem{remark}[thm]{Remark}
\newtheorem{openq}[thm]{Open Question}
\newcommand{\NN}{\mathbb{N}}
\renewcommand{\div}{\mid}
\newcommand{\entdiv}{\mid\mid}
\newcommand{\ap}[1]{P_{#1}}
\definecolor{darkgreen}{rgb}{0,0.5,0}
\title{Primitive abundant and weird numbers with many prime factors}
\author[G. Amato]{Gianluca Amato}
\address{Universit\`a di Chieti-Pescara\\ Dipartimento di Economia Aziendale, viale della Pineta 4, I-65129 Pescara, Italy}
\email{gianluca.amato@unich.it}
\author[M. F. Hasler]{Maximilian F. Hasler}
\address{Universit\'e des Antilles\\D.S.I., B.P. 7209\\ Campus de Schoelcher\\ F-97275 Schoelcher cedex, Martinique (F.W.I.)}
\email{Maximilian.Hasler@Univ-Antilles.fr}
\author[G. Melfi]{Giuseppe Melfi}
\address{University of Applied Sciences of Western Switzerland\\ HEG Arc\\
Espace de l'Europe, 21, CH-2000 Neuch\^atel}
\email{giuseppe.melfi@he-arc.ch}
\author[M. Parton]{Maurizio Parton}
\address{Universit\`a di Chieti-Pescara\\ Dipartimento di Economia, viale della Pineta 4, I-65129 Pescara, Italy}
\email{maurizio.parton@unich.it}
\thanks{The third and the fourth author were supported by the GNSAGA group of INdAM and by the MIUR under the PRIN Project ``Variet\`a reali e complesse: geometria, topologia e analisi armonica''}
\keywords{deficient numbers, abundant numbers, primitive abundant numbers, weird numbers, primitive weird numbers, sum-of-divisor function}
\subjclass[2010]{Primary 11A25; Secondary 11-04, 11N25, 11Y55}
\begin{document}

\begin{abstract}
We give an algorithm to enumerate all primitive abundant numbers (briefly, PAN) with a fixed $\Omega$ (the number of prime factors counted with their multiplicity), and explicitly find all PAN up to $\Omega=6$, count all PAN and square-free PAN up to $\Omega=7$ and count all odd PAN and odd square-free PAN up to $\Omega=8$. We find primitive weird numbers (briefly, PWN) with up to 16 prime factors, improving the previous results of \cite{AHMPWN} where PWN with up to 6 prime factors were given. The largest PWN we find has 14712 digits: as far as we know, this is the largest example existing, the previous one being 5328 digits long \cite{MelCIP}. We find hundreds of PWN with exactly one square odd prime factor: as far as we know, only five were known before. We find all PWN with at least one odd prime factor with multiplicity greater than one and $\Omega  = 7$ and prove that there are none with $\Omega < 7$. Regarding PWN with a cubic (or higher power) odd prime factor, we prove that there are none with $\Omega\le 7$, and we did not find any with larger $\Omega$. Finally, we find several PWN with 2 square odd prime factors, and one with 3 square odd prime factors. These are the first such examples.
\end{abstract}

\maketitle
\section{Introduction}

Let $n\in\NN$ be a natural number, and let $\sigma(n)=\sum_{d\div n}d$ be the sum of its divisors. If $\sigma(n)>2n$, then $n$ is called \emph{abundant}, whereas if $\sigma(n)<2n$, then $n$ is called \emph{deficient}. \emph{Perfect numbers} are those for which $\sigma(n)=2n$.  If $n$ is abundant and can be expressed as a sum of distinct proper divisors, then $n$ is called \emph{semiperfect}, or sometimes also \emph{pseudoperfect}. A \emph{weird number} is a number which is abundant but not semiperfect.

If $n$ is abundant and it is not a multiple of a smaller non-deficient number than $n$ is called a \emph{primitive abundant number}, PAN in this paper. Similarly, a \emph{primitive weird number}, PWN in this paper, is a weird number which is not a multiple of any smaller weird number.

In this paper, we study primitive abundant and weird numbers.


Leonard Eugene Dickson, in two papers from 1913 on the American Journal of Mathematics~\cite{DicFOP,DicEAN}, proves that the sets of PAN having any given number $\omega$ of distinct prime factors is finite (for even PAN, one also needs to fix the exponent of 2). He then explicitly finds all odd PAN with $\omega\le 4$, and all even PAN with $\omega\le 3$, see also~\cite{FerMTE,HerTaE} for errata in Dickson's tables. The number of odd PAN with $\omega=5$ was found in 2017, while for $\omega\ge 6$ the problem is still open (see~\cite{DicAbN}, where a lower bound for $\omega=6$ is given). 

Motivated by the above discussion, in this paper we focus on the set of PAN with a given number $\Omega$ of prime factors \emph{counted with their multiplicity}. In this way we have been able to explicitly find all PAN with $\Omega$ up to 6, to count PAN with $\Omega\le 7$, and to count odd PAN with $\Omega\le 8$, see Table~\ref{tab:pan-count} and OEIS sequences \href{https://oeis.org/A298157}{A298157} and \href{https://oeis.org/A287728}{A287728}.
All these results are new, to the best of our knowledge.

Weird numbers were defined in 1972 by Stan Benkoski \cite{BenPSS}, and appear to be rare: for instance, up to $10^4$ we have only $7$ of them \cite{SloOEI}. Despite this apparent rarity, which is the reason for the name, weird numbers are easily proven to be infinite: if $n$ is weird and $p$ is a prime larger than $\sigma(n)$, then $np$ is weird (see for example \cite[page~332]{FriSDE}). But a much stronger property is true: Benkoski and Paul Erd\H{o}s, in their joint 1974 paper \cite{BeEWPN}, proved that the set of weird numbers has positive asymptotic density. Nevertheless, it is not yet known whether PWN are infinite:
\begin{conj}\cite[end of page 621]{BeEWPN}
	There exist infinitely many PWN.
\end{conj}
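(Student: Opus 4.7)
This is the Benkoski--Erd\H{o}s conjecture, open since 1974, so any plan here is necessarily speculative; I would pursue two routes in parallel.

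\emph{Density contradiction.} Suppose, for contradiction, that the PWN form a finite set $w_1<\cdots<w_k$. Every weird number is divisible by some PWN, because starting from any weird $w$ either $w$ is already primitive or it is a multiple of a strictly smaller weird number, and the descent terminates by well-ordering. Consequently the weird numbers are contained in $\bigcup_{i=1}^k w_i\NN$, a set of natural density at most $\sum_{i=1}^k 1/w_i$. Since Benkoski and Erd\H{o}s established that the weird numbers have positive lower density $\delta>0$, the contradiction we want follows from $\sum 1/w_i<\delta$. The obstacle is purely quantitative: the explicit lower bounds on $\delta$ currently available are orders of magnitude smaller than the reciprocal sum of the known PWN. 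The plan therefore needs either a substantial sharpening of the Benkoski--Erd\H{o}s density bound, or a positive local density estimate for weird numbers restricted to a residue class coprime to every $w_i$, which by hypothesis cannot meet any multiple of a PWN and so would yield the contradiction immediately.

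\emph{Explicit infinite family.} The classical trick $n\mapsto np$ with $p>\sigma(n)$ prime preserves weirdness but destroys primitivity, so a family of PWN must be designed from scratch. Guided by the PWN tabulated in this paper, natural candidates have the form $n=2^a p_1\cdots p_r$ for distinct odd primes, and must satisfy (a)~$\sigma(n)>2n$ with a small controlled excess $\Delta=\sigma(n)-2n$; (b)~every proper divisor of $n$ is deficient, to guarantee primitivity of the abundance; (c)~$\Delta$ is not a sum of distinct proper divisors of $n$, which is non-semiperfectness, hence weirdness. Step~(a) is a multiplicative inequality and easy to enforce; step~(b) reduces to finitely many explicit bounds on $\sigma/\mathrm{id}$ for the immediate sub-factorisations and can be met by taking the primes sufficiently spread out.

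\emph{The main obstacle.} Step~(c) is where every attempt stalls. A promising angle is to take the largest prime $p_r$ so large that any subset of divisors of $n$ summing to $n$ must contain $p_r$; the problem then reduces to representing $n-p_r$ as a subset sum of divisors of $n/p_r$, and the aim is to engineer an infinite sequence of such reductions whose terminal instance is manifestly non-representable---for example by a congruence obstruction modulo some $p_i$, or by a size argument forcing the remaining target to exceed the sum of all remaining divisors. Until a uniform combinatorial obstruction of this kind is found, the conjecture will remain out of reach; even infinitude of PWN in a restricted family, such as those with $\Omega\to\infty$ or with prescribed $2$-adic valuation, would already be a substantial step forward and a natural first target.
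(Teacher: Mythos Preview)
The paper does not prove this statement: it is presented as an open conjecture (attributed to Benkoski and Erd\H{o}s, 1974) and is in fact one of the paper's motivations for searching for new PWN. You correctly recognise this and offer research directions rather than a proof, so there is nothing to compare against.

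A few remarks on the content of your sketch. The density-contradiction route is logically sound as far as it goes: every weird number is indeed a multiple of some PWN by descent, so finiteness of the PWN would trap the weird numbers inside $\bigcup_{i=1}^k w_i\NN$, a set of upper density at most $\sum 1/w_i$. The obstacle you name is real, but there is a sharper version worth noting: rather than bounding the density of the union crudely by $\sum 1/w_i$, one could try to show that for each fixed $w_i$ the set of weird multiples of $w_i$ has density strictly less than $1/w_i$ (since not every multiple of a weird number is weird), and then sum. This still runs into the same quantitative wall, but it is the natural refinement.

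For the explicit-family route, your analysis of where the difficulty lies (the non-semiperfectness condition~(c)) is accurate and matches the experimental picture in the paper: conditions~(a) and~(b) are exactly what Theorem~\ref{thm:primitive-weird} and the surrounding machinery control, while~(c) is handled only by direct verification, never structurally. The reduction you propose---forcing the largest prime into any representing subset and then recursing---is reasonable, but note that the paper's data (Remark~\ref{rem:mfour}) suggest the relevant structure is sensitive to the $2$-adic part in ways not yet understood, so any congruence obstruction would likely have to interact with that.
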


The search for new PWN is made more interesting by the fact that we still do not know whether they are infinite. The list of the first PWN is regularly updated by \href{https://oeis.org/wiki/User:Robert_G._Wilson_v}{Robert G. Wilson v}, and at the time of writing (February 2018) the first 1081 PWN are known, see OEIS \href{https://oeis.org/A002975}{A002975}.


Looking for the largest possible PWN is also very interesting. One approach is to consider patterns in the prime factorization of PWN, see~\cite{AHMPWN}.
At the time of writing (February 2018), only 12 PWN with 6 distinct prime factors~\cite{AHMPWN} and just one PWN with 7 distinct prime factors~\cite{BREPCL} are known.

In this paper we dramatically improve these figures. We find hundreds of PWN with more than 6 distinct prime factors. In particular, we find PWN with up to 16 distinct prime factors, see Tables~\ref{table:smallweirds} and \ref{table:bigweirds}. The largest PWN we have found has 16 distinct prime factors and 14712 digits. As far as we know, this is the largest PWN known, the previous one being 5328 digits long \cite{MelCIP}.

Another strange behavior in the prime decomposition of PWN is the fact that only five PWN with non square-free odd prime factors were known, see OEIS sequence \href{https://oeis.org/A273815}{A273815}, and no PWN with an odd prime factor of multiplicity strictly greater than two is known.
We explain this fact with Theorem~\ref{thm:patterns}:
\begin{thm*}[PWN with non square-free odd part and $\Omega\le 7$]
	\label{thm:patterns}
	There are no PWN $m$ with a quadratic or higher power odd prime factor and $\Omega(m)<7$. There are no PWN $m$ with 2 quadratic odd prime factor and $\Omega(m)=7$. There are no PWN $m$ with a cubic or higher power odd prime factor and $\Omega(m)=7$.
\end{thm*}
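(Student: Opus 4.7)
The key reduction is the inclusion PWN~$\subseteq$~PAN: if $n$ is weird and $n=km$ with $m<n$ abundant, then $m$ cannot be semiperfect (otherwise, scaling any decomposition $m=\sum d_i$ by $k$ yields a decomposition of $n$ into distinct proper divisors, contradicting the weirdness of $n$), so $m$ itself must be weird; but then $n$ is not a PWN. Consequently every PWN is a PAN, and the theorem reduces to showing that no element of a certain explicit, finite subset of PAN is weird.

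My plan is then to enumerate, for each of the three claims, precisely those PAN whose odd part has the prescribed non-squarefree pattern, and to certify each of them as semiperfect. For $\Omega\le 6$ the paper already provides the full list of PAN, so one simply filters those whose odd part contains a square, a cube, or a higher prime power. For the two statements at $\Omega=7$ (two odd squares, or one odd cube or higher) the explicit list is not in hand, so I would specialize the enumeration algorithm of the paper to branch first on the exponents of the odd primes, with the prescribed pattern imposed from the outset, and fall back to the usual deficiency-bound pruning for the remaining primes. A fixed square or cube contributes a known factor to $\sigma(m)/m$, which bounds the residual abundancy and hence the number and size of the remaining prime factors, so the specialised search tree is provably finite and can be walked exhaustively.

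Each resulting candidate $m$ is then tested for semiperfectness by a subset-sum search over the proper divisors of $m$; the expected outcome is that a decomposition is found in every case, ruling $m$ out as weird and establishing the theorem. The main obstacle will be the $\Omega=7$ cases: both producing a provably complete list of PAN with the prescribed exponent pattern, and, when a candidate has many divisors, locating a subset-sum decomposition efficiently. The former is handled by the algorithmic pruning above; the latter by a meet-in-the-middle strategy, or by exploiting the fact that the abundance $\sigma(m)-2m$ of primitive abundants with few prime factors tends to be small, which makes semiperfect representations both numerous and easy to locate.
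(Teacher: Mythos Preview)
Your proposal is correct and coincides with the paper's own argument: the theorem is established computationally by exhaustively enumerating the relevant PAN via (a specialization of) Algorithm~\ref{algo:pan_bigomega} and then testing each candidate for weirdness, exactly as you describe. One small caveat: your closing heuristic is backwards---small abundance makes weirdness \emph{more} likely, not less (indeed all PWN in Table~\ref{table:primesquare} have tiny $\Delta$), though this affects only your intuition about efficiency, not the correctness of the plan.
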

These results are new, to the best of our knowledge.

Finally, in this paper we find hundreds of new PWN with a square odd prime factor, see Table~\ref{table:primesquare} for a selection of them. We find several new PWN with 2 square odd prime factors, and one with 3 square odd prime factors, see Table~\ref{table:primesquaremore}. These are the first examples of this kind.

In the following, we describe with some details the methods of the paper.

In Section~\ref{sec:pre} we start with a careful analysis of the set $A_\Omega$ of PAN $m$ with a fixed number $\Omega=\Omega(m)$ of prime factors counted with their multiplicity. These sets are finite, a corollary of Dickson's theorems~\cite{DicFOP,DicEAN}, see also Theorem~\ref{thm:dick}. The main result in this section is Theorem~\ref{thm:primitive-weird} about the structure of PAN: they are of the form $mp^e$, where $p$ is a prime larger than the largest prime factor of $m$, and $m$ is a deficient number satisfying certain conditions involving the \emph{center} $c(m)=\sigma(m)/(2m-\sigma(m))$ of $m$, see Definition~\ref{def:center}. Note that some results in this section are easy consequences of the definitions, and some of them are well-known: however, we leave them in the paper because we will use them extensively in Sections~\ref{sec:abundant-enumerate} and~\ref{sec:weird}.


In Section~\ref{sec:abundant-enumerate} we face the problem of explicitly computing $A_\Omega$, or some statistics on it, for specific values of $\Omega$. Here we distinguish the square-free case from the general case, since the former appears to be notably simpler than the latter.

Every square-free PAN is then given by $p_1\dotsm p_{k-1}p_k$ for certain primes $p_1<\dots<p_k$, and $p_1\dotsm p_i$ is recursively built from $p_1\dotsm p_{i-1}$ by imposing $p_1\dotsm p_i$ deficient. This gives an explicit construction for $A_\Omega$ in the square-free case, see Algorithm~\ref{algo:sfpan}. However, since the condition for $p_1\dotsm p_i$ to be deficient is open (see Proposition~\ref{prop:deficient}), we need a termination condition. This is done by exploiting Theorem~\ref{thm:primitive-expand}, stating that a deficient sequence of primes $m = \bar p_1^{e_1} \dotsm \bar p_r^{e_r}$ can be completed to a PAN $mpq$ for suitable primes $p,q$. Applying this machinery we explicitly find all square-free PAN with $\Omega\le 6$, count the square-free PAN with $\Omega\le 7$ and count odd square-free PAN with $\Omega\le 8$, see Table~\ref{tab:sfpan-count} and OEIS sequences \href{https://oeis.org/A295369}{A295369} and \href{https://oeis.org/A287590}{A287590}.
%
%

Adapting the techniques to the non square-free case essentially means allowing consecutive primes in $m = p_1, \ldots, p_k$ to be equal, and being more careful in identifying which sequences of primes give origin to PAN. As already said, we explicitly find all PAN with $\Omega\le 6$, count PAN with $\Omega\le 7$, and count odd PAN with $\Omega\le 8$, see Table~\ref{tab:pan-count} and OEIS sequences \href{https://oeis.org/A298157}{A298157} and \href{https://oeis.org/A287728}{A287728}.


In Section~\ref{sec:weird} we deal with PWN. Since we are interested in $k$ prime factors for large $k$, it is not computationally feasible to explicitly find all PAN and then check for weirdness. Thus, when computing the deficient seed $m=p_1 \dotsm p_{k-1}$, we choose an \emph{amplitude} $a$ and limit the choice for $p_i$ to the first $a$ primes larger than $c(p_1\dotsm p_{i-1})$, and the choice for $p_k$ to the last $a$ primes smaller than $c(p_1\dotsm p_{k-1})$. In order to be able to deal with the huge numbers involved, we represent them in a form we have called \emph{index sequence}, see Definition~\ref{def:index}. Finally, in Remarks~\ref{rem:limitation},~\ref{rem:mfour}, and in Section~\ref{sec:open}, we explain what we noticed from experiments, as a possibly useful future reference.
The new findings in this section are: PWN with more than 6 distinct prime factors, PWN with up to 16 distinct prime factors, see Tables~\ref{table:smallweirds} and \ref{table:bigweirds}, PWN with a square odd prime factor, see Table~\ref{table:primesquare}, PWN with 2 and 3 square odd prime factors, see Table~\ref{table:primesquaremore}, and Theorem~\ref{thm:patterns} on patterns for PWN. 

Note that the problem of finding a PWN with a cubic or higher power odd prime factor is still open. This, and other open questions, are listed in Section~\ref{sec:open}.

All the software we have developed and results of our experiments are available on-line at the GitHub repository \repourl.


\section{Deficient, perfect and abundant numbers}\label{sec:pre}

In line with~\cite{GuyUPN}, we will refer to $\Delta(n) \eqdef \sigma(n)-2n$ as the \emph{abundance} of $n$, and to $d(n) \eqdef 2n-\sigma(n)=-\Delta(n)$ as the \emph{deficiency} of $n$. It is sometimes convenient to use the notation $\sigma_\ell(n) \eqdef \sum_{d\mid n}d^\ell$ for the sum of the $\ell$-th powers of divisors, so that $\sigma_0$ is the number of divisors including 1 and the number itself, $\sigma \eqdef \sigma_1$ their sum, and $\sigma_{-1}(n) = \sigma(n)/n$ is the \emph{abundancy} of $n$. One can characterize deficient, perfect and abundant numbers respectively by $\sigma_{-1}(n)<2$, $\sigma_{-1}(n)=2$, $\sigma_{-1}(n)>2$.

If $n = p_1^{e_1} \dotsm p_k^{e_k}$ with $p_1 < \dotsb < p_k$ primes, then for each $p_i$ we can choose an exponent from $0$ to $e_i$ to build a divisor of $n$. Therefore, the function $\sigma_\ell$ is multiplicative on prime factors, that is, we have:
\begin{equation}
\label{eq:sigma}
\sigma_\ell(n) = \prod_{i=1}^k  (1 + p_i^\ell + \dots + p_i^{\ell e_i}) = \prod_{i=1}^k \frac{p_i^{(e_i+1)l}-1}{p_i^\ell - 1} = \prod_{i=1}^k \sigma_\ell(p_i^{e_i})
\end{equation}
Moreover, since $\sigma_\ell(p^e) \leq \sigma_\ell(p)^e$, then $\sigma_\ell$ is sub-multiplicative, i.e., $\sigma_\ell(mn) \leq \sigma_\ell(m) \sigma_\ell(n)$.

If a number is non-deficient (i.e., either perfect or abundant) and all of its proper divisors are deficient, then it is called \emph{primitive non-deficient}. A \emph{primitive abundant number} PAN is a primitive non-deficient number which is also abundant\footnote{Some authors define a PAN to be an abundant number with no abundant proper divisors. The two definitions differ on multiples of perfect numbers. For example, $30 = 2 \cdot 3 \cdot 5$ is primitive abundant according to this alternative definition, but not according to ours, since $2 \cdot 3$ is perfect, hence non-deficient.}.

\begin{prop}
	\label{prop:abundance}
	If $m$ is non-deficient and $n \in \NN$, $n>1$, then $mn$ is abundant.
\end{prop}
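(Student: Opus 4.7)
The plan is to deduce abundance of $mn$ from the fact that $\sigma_{-1}$ is \emph{strictly} monotone along the divisibility order. Concretely, I would prove (or invoke) the auxiliary fact: if $d\mid N$ and $d<N$, then $\sigma_{-1}(N)>\sigma_{-1}(d)$.

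To establish this, I would rewrite the abundancy by reindexing divisors via $f=N/e$:
\[
\sigma_{-1}(N)=\frac{\sigma(N)}{N}=\sum_{e\mid N}\frac{e}{N}=\sum_{f\mid N}\frac{1}{f}.
\]
Every divisor of $d$ is a divisor of $N$, so termwise $\sum_{f\mid d}1/f \le \sum_{f\mid N}1/f$. Moreover $N$ itself appears as a divisor on the right but not on the left, since $d<N$ forces $N\nmid d$; hence $1/N$ is an extra positive term, giving the strict inequality $\sigma_{-1}(N)>\sigma_{-1}(d)$.

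With this lemma in hand the proposition is a one-line application: take $d=m$ and $N=mn$. Since $n>1$ we have $m\mid mn$ and $m<mn$, so $\sigma_{-1}(mn)>\sigma_{-1}(m)\ge 2$, and therefore $mn$ is abundant.

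I don't expect any real obstacle: the only subtle point is getting the \emph{strict} inequality (one cannot rely on $\sigma_{-1}(n)>1$ together with multiplicativity of $\sigma$, because $m$ and $n$ need not be coprime), and the $1/N$ contribution settles it cleanly. An alternative, slightly less uniform, route would split on whether $\gcd(m,n)=1$ (use multiplicativity of $\sigma$ and $\sigma(n)\ge n+1$) or not (reduce by writing $n=n'n''$ with $\gcd(m,n')=1$ and $n''\mid m^\infty$), but the divisor-sum argument above avoids the case split and seems cleanest.
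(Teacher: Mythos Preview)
Your proof is correct and is essentially the paper's argument recast in terms of $\sigma_{-1}$: the paper injects divisors of $m$ into divisors of $mn$ via $d\mapsto dn$ and notes an extra divisor (e.g., $1$) for strictness, while you inject via the identity map and use $mn$ as the extra divisor in the reciprocal sum. Both are the same ``divisor injection plus one leftover term'' idea.
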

\begin{proof}
	If $d$ and $d'$ are distinct divisors of $m$, then $dn$ and $d'n$  are distinct divisors of $mn$. If $\sum_{d \div m} d \geq 2m$, then $\sum_{d \div mn} d > \sum_{d \div m} dn \geq 2mn$.
\end{proof}

\begin{cor}
	All perfect numbers are primitive non-deficient.
\end{cor}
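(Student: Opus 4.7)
The plan is to invoke Proposition~\ref{prop:abundance} contrapositively. Let $n$ be perfect. By definition $\sigma(n)=2n$, so $n$ is non-deficient; it remains only to show every proper divisor of $n$ is deficient.

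Suppose, for contradiction, that some proper divisor $d$ of $n$ is non-deficient. Write $n = d\cdot k$ with $k = n/d$. Since $d$ is a \emph{proper} divisor we have $d<n$, hence $k>1$. Proposition~\ref{prop:abundance} then applies with $m=d$ and $n \leftarrow k$, yielding that $dk = n$ is abundant. This contradicts $\sigma(n)=2n$.

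Therefore every proper divisor of $n$ is deficient, and $n$ is primitive non-deficient.

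There is no real obstacle here: the statement is an immediate corollary of Proposition~\ref{prop:abundance}, and the only thing to be careful about is the definition of \emph{primitive non-deficient} as adopted in the paper (non-deficient with all \emph{proper} divisors deficient), which neatly sidesteps the footnote's alternative convention under which even $6$ would fail to qualify as a primitive abundant number.
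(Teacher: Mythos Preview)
Your proof is correct and follows essentially the same approach as the paper: both argue by contradiction, assuming a non-deficient proper divisor and invoking Proposition~\ref{prop:abundance} to force the perfect number to be abundant. Your version is merely more explicit about the factor $k>1$.
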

\begin{proof}
	If $m$ is perfect and $n$ is a proper divisor of $m$, $n$ should be deficient. Otherwise, by Proposition~\ref{prop:abundance}, $m$ would be  abundant.
\end{proof}

The following corollary states that, whenever we want to check if $m$ is primitive, we need to look only at a subset of its divisors.
\begin{cor}
	\label{cor:primdiv}
	If $m$ is abundant and $m/p$ is deficient for all primes $p \div m$, then $m$ is primitive abundant.
\end{cor}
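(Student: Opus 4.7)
The plan is to show directly that every proper divisor of $m$ is deficient; together with the hypothesis that $m$ is abundant, this gives that $m$ is a PAN by the definition recalled in the introduction. So let $d$ be any proper divisor of $m$, with $d<m$ and $d\div m$.

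First I would reduce to the case covered by the hypothesis. Since $d<m$, the integer $m/d$ is greater than 1, so it admits at least one prime factor $p$. From $p\div m/d$ we get $dp\div m$, i.e.\ $d\div m/p$, where $m/p$ is deficient by hypothesis. This is the key bookkeeping step: every proper divisor $d$ of $m$ is trapped inside some $m/p$, and the hypothesis is therefore strong enough to reach all proper divisors at once.

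Next I would propagate deficiency downward inside $m/p$. If $d=m/p$ we are already done. Otherwise $d$ is a proper divisor of $m/p$, so $m/p=d\cdot k$ for some integer $k>1$. Here I would invoke Proposition~\ref{prop:abundance}, in its contrapositive form: if $d$ were non-deficient, then $dk=m/p$ would be abundant, contradicting the hypothesis that $m/p$ is deficient. Hence $d$ is deficient, which is what we wanted.

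This is really more of a bookkeeping exercise than a proof with a single hard step; the only thing to be careful with is that divisibility ``factors through'' the maximal divisors $m/p$, so the hypothesis on $m/p$ for $p$ prime (rather than on all proper divisors) is already sufficient. The argument uses nothing beyond Proposition~\ref{prop:abundance} and the definition of PAN, so I expect a proof of three or four lines once written out.
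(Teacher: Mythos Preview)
Your proposal is correct and follows essentially the same approach as the paper: both arguments observe that any proper divisor of $m$ must divide some $m/p$ with $p$ prime, and then invoke (the contrapositive of) Proposition~\ref{prop:abundance} to conclude that this divisor is deficient. The only cosmetic difference is that the paper phrases it as a proof by contradiction (assume a non-deficient proper divisor exists) whereas you argue directly that every proper divisor is deficient.
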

\begin{proof}
	If $m$ is not primitive abundant, then there exists some non-deficient number $m'\mid m$, with $m'<m$. Moreover, there exists a prime $p \div m$ such that $m' \div m/p$, and since $m/p$ is deficient, by Proposition~\ref{prop:abundance} this contradicts the fact that $m'$ is non-deficient.
\end{proof}

\begin{prop}
	\label{prop:ab1}
	Let $m = p_1^{e_1} \dotsm p_k^{e_k}$ with $p_1 <  \dots < p_k$. Choose a position $i \leq k$ and a prime $p$ such that $(m,p)=1$. Let $\widetilde m$ be the result of substituting $p_i^{e_i}$ with $p^{e_i}$ in the decomposition of $m$, i.e., $\widetilde m = mp^{e_i}/p_i^{e_i}$. Then
	\begin{itemize}
		\item\label{it:tildeabu} if $m$ is abundant or perfect and $p < p_i$, then $\widetilde m$ is abundant;
		\item\label{it:tildedef} if $m$ is deficient or perfect and $p > p_i$ then $\widetilde m$ is deficient.
	\end{itemize}
\end{prop}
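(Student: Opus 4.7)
The plan is to reduce everything to a monotonicity statement about the abundancy $\sigma_{-1}(p^e) = 1 + 1/p + 1/p^2 + \dots + 1/p^e$ as a function of $p$ (with $e$ held fixed), and then exploit multiplicativity of $\sigma_{-1}$.

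First I would observe that since $(m,p)=1$, the prime $p$ is distinct from all the $p_j$, so $\widetilde m$ has the prime factorization $p_1^{e_1}\dotsm p_{i-1}^{e_{i-1}} p^{e_i} p_{i+1}^{e_{i+1}} \dotsm p_k^{e_k}$. By the multiplicativity of $\sigma_{-1}$ (equation~\eqref{eq:sigma} with $\ell=-1$, divided by $n$), we then have
\[
\sigma_{-1}(\widetilde m) \;=\; \sigma_{-1}(m) \cdot \frac{\sigma_{-1}(p^{e_i})}{\sigma_{-1}(p_i^{e_i})}.
\]
So the whole statement reduces to comparing $\sigma_{-1}(p^{e_i})$ with $\sigma_{-1}(p_i^{e_i})$.

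Next I would note that for fixed exponent $e \geq 1$, the function $q \mapsto \sigma_{-1}(q^e) = 1 + q^{-1} + \dots + q^{-e}$ is \emph{strictly} decreasing in the prime $q$, simply because each term $q^{-j}$ with $j \geq 1$ is strictly decreasing. Therefore $p < p_i$ forces $\sigma_{-1}(p^{e_i}) > \sigma_{-1}(p_i^{e_i})$, and $p > p_i$ forces the opposite strict inequality.

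Plugging this back into the displayed identity gives the conclusion directly. In the first case, $m$ non-deficient means $\sigma_{-1}(m) \geq 2$, and the strict ratio $>1$ yields $\sigma_{-1}(\widetilde m) > 2$, so $\widetilde m$ is abundant. In the second case, $m$ non-abundant means $\sigma_{-1}(m) \leq 2$, and the strict ratio $<1$ yields $\sigma_{-1}(\widetilde m) < 2$, so $\widetilde m$ is deficient. There is no real obstacle here; the only thing to be careful about is that the perfect boundary cases ($\sigma_{-1}(m)=2$) still produce strict inequalities, which is exactly why one needs strict monotonicity of $q \mapsto \sigma_{-1}(q^e)$ rather than just monotonicity.
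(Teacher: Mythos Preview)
Your proof is correct and follows essentially the same approach as the paper: both arguments rest on the strict monotonicity of $\sigma_{-1}$ in each prime factor (with the exponent held fixed) and then compare $\sigma_{-1}(\widetilde m)$ to $2$. The only cosmetic difference is that you isolate the ratio $\sigma_{-1}(p^{e_i})/\sigma_{-1}(p_i^{e_i})$ via multiplicativity, whereas the paper phrases the monotonicity directly in terms of replacing $p_i$ by $p$ in every summand of $\sigma_{-1}(m)$.
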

\begin{proof}
	Note that replacing $p_i$ with $p$ in $m$ results in replacing $p_i$ with $p$ in all divisors of $m$. This means that whenever $p_i$ appears in a summand of $\sigma_{-1}(m)$, it is replaced with $p$. Thus, $\sigma_{-1}(m)$ is decreasing in the $p_i$'s.
	Therefore, if $m$ is abundant or perfect and $p < p_i$, we have $\sigma_{-1}(\widetilde m) > \sigma_{-1}(m) \geq 2$, hence $\tilde m$ is abundant. Similarly for the second case.
\end{proof}

Note that, if $m =  p_1^{e_1} \dotsm p_k^{e_k}$ is primitive abundant and we replace $p_i^{e_i}$ with $p^{e_i}$ for some $p < p_i$, we are not sure that the number we obtain is primitive abundant (although we know it is abundant). For example, $3^2 \cdot 5\cdot 7 \cdot 103$ is primitive abundant, but $2^2 \cdot 5 \cdot 7 \cdot 103$ is not, since $2 \cdot 5 \cdot 7$ is primitive abundant. Another example  involving square-free numbers is the following: $2 \cdot 7 \cdot 11 \cdot 13$ is primitive abundant but $2 \cdot 5 \cdot 11 \cdot 13$ is not, since $2 \cdot 5 \cdot 11$ is already abundant.

\subsection{Adding a new coprime factor \texorpdfstring{$p^e$}{p\textasciicircum e} to a deficient number}

The following proposition considers the problem of starting with a deficient number $m$ and adding a new prime factor $p^e$ with $(p,m)=1$. We want to study under which conditions $mp^e$ is perfect, (primitive) abundant or deficient. The reason we are interested in this problem is that, in Section~\ref{sec:abundant-enumerate}, we will build PAN by adjoining one prime factor at a time to a starting deficient number.

\begin{prop}
	\label{prop:deficient}
	If $m$ is deficient, $e \in \NN$ and $p$ is a prime such that $(m,p)=1$, 
	\begin{itemize}
		\item if $p^e / \sigma(p^{e-1}) < \sigma(m) / d(m)$ then $mp^e$ is abundant;
		\item if $p^e / \sigma(p^{e-1}) = \sigma(m) / d(m)$ then $mp^e$ is perfect;
		\item if $p^e / \sigma(p^{e-1}) > \sigma(m) / d(m)$ then $mp^e$ is deficient.
	\end{itemize}
\end{prop}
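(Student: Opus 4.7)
The plan is a direct computation using the multiplicativity of $\sigma$ together with the definition of $d(m)=2m-\sigma(m)$. The key identity I would start from is
\[
\sigma(p^e) = 1 + p + p^2 + \dots + p^e = \sigma(p^{e-1}) + p^e,
\]
which, combined with $(m,p)=1$ (so that $\sigma(mp^e)=\sigma(m)\sigma(p^e)$ by \eqref{eq:sigma}), lets me expand the abundance of $mp^e$ in a form that naturally features both $\sigma(p^{e-1})$ and $d(m)$.

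Concretely, I would compute
\[
\Delta(mp^e) \;=\; \sigma(mp^e) - 2mp^e \;=\; \sigma(m)\bigl(\sigma(p^{e-1})+p^e\bigr) - 2mp^e
\;=\; \sigma(m)\sigma(p^{e-1}) - p^e\bigl(2m-\sigma(m)\bigr),
\]
and then substitute $2m-\sigma(m)=d(m)$ to obtain
\[
\Delta(mp^e) \;=\; \sigma(m)\sigma(p^{e-1}) - p^e\, d(m).
\]
Since $m$ is deficient, $d(m)>0$; and of course $\sigma(p^{e-1})>0$. Therefore the sign of $\Delta(mp^e)$ coincides with the sign of
\[
\frac{\sigma(m)}{d(m)} - \frac{p^e}{\sigma(p^{e-1})},
\]
and the three cases of the statement follow immediately: $mp^e$ is abundant, perfect, or deficient according as $p^e/\sigma(p^{e-1})$ is less than, equal to, or greater than $\sigma(m)/d(m)$.

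There is essentially no obstacle here: the proof is purely a rearrangement of the formula for $\sigma$ on coprime factors, and the only thing to be mildly careful about is to divide (or cross-multiply) by the positive quantities $d(m)$ and $\sigma(p^{e-1})$ in the right direction so that the inequality in the hypothesis translates to the correct sign of $\Delta(mp^e)$. No additional lemma beyond \eqref{eq:sigma} is needed.
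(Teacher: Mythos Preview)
Your proposal is correct and follows essentially the same route as the paper: the paper's proof simply states that $d(mp^e)=d(m)p^e-\sigma(m)\sigma(p^{e-1})$ and reads off the three cases from the sign of $d(mp^e)$, which is exactly your identity $\Delta(mp^e)=\sigma(m)\sigma(p^{e-1})-p^e\,d(m)$ with the sign flipped. Your write-up is slightly more explicit about the step $\sigma(p^e)=\sigma(p^{e-1})+p^e$, but the argument is the same.
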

\begin{proof}
	With simple algebraic manipulations one can show that $d(mp^e) = d(m)p^e - \sigma(m)\sigma(p^{e-1})$. The thesis immediately follows by imposing $d(mp^e)$ less, equal or greater than zero.
\end{proof}

Since the term $\sigma(m)/d(m)$ will have a major role in the following, we introduce a more succinct notation. See also~\cite[Formula~(10)]{DicFOP}.

\begin{defin}[Center of a deficient number]
	\label{def:center}
	Given  $m \in \NN$, we call \emph{center} of $m$ the value $c(m) \eqdef \sigma(m)/d(m)$.
\end{defin}

Let $m$ be deficient and $p$ a prime such that $(m,p)=1$ and $p < c(m)$. By Proposition~\ref{prop:deficient}, it turns out that $mp$ is abundant. However, it is not guaranteed to be primitive abundant. Consider for example $m=16$, with $c(m)=31$. If we take $p = 7$, we have that $16 \cdot 7$ is abundant but not primitive abundant, since $8 \cdot 7$ is abundant, too. Another example, in which all prime numbers occur with multiplicity one, is $m= 2 \cdot 13 \cdot 31 = 806$. Then $5 < c(m)< 6$. If we take $p=3$, then $mp$ is abundant but not primitive abundant, since $2 \cdot 3 \cdot 13$ is abundant.

\begin{prop}
	\label{prop:center}
	The center enjoys the following properties:
	\begin{enumerate}
		\item\label{prop:center:alt} $c(m) = \displaystyle\frac{2m}{d(m)} - 1 =\frac{1}{\displaystyle\frac{2}{\sigmam(m)} - 1}$, for any deficient $m \in \NN$;
		\item\label{prop:center:mult} if $n > 1$ and $mn$ is deficient, then $c(mn) > c(m)$;
		\item\label{prop:center:inc} for any prime $p$, $c(p^e)$ is increasing in $e$ and $\displaystyle\lim_{e \rightarrow +\infty} c(p^e) = \frac{p}{p-2}$;
		\item\label{prop:center:dec} if $m$ is deficient and $p,q$ are primes coprime with $m$, $q>p>c(m)$, then $c(mq)<c(mp)$.
	\end{enumerate}
\end{prop}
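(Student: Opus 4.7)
The plan is to handle the four parts in sequence, with (2)--(4) leveraging the reformulations from (1) together with the identity $d(mp^e)=d(m)p^e-\sigma(m)\sigma(p^{e-1})$ that was established inside the proof of Proposition~\ref{prop:deficient}. Throughout, I would use that $\sigma_{-1}(m)=\sigma(m)/m$ and that $c(\cdot)$ is positive precisely on deficient arguments, so I need to check deficiency before writing $c$ for any composite expression.

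Part~(1) is pure algebra: $\frac{2m}{d(m)}-1=\frac{2m-d(m)}{d(m)}=\frac{\sigma(m)}{d(m)}=c(m)$, and starting from $\frac{2}{\sigma_{-1}(m)}-1=\frac{2m-\sigma(m)}{\sigma(m)}=\frac{d(m)}{\sigma(m)}=\frac{1}{c(m)}$ yields the reciprocal form. For part~(2), the function $x\mapsto 1/(2/x-1)$ is strictly increasing on $(0,2)$, so by the second form in~(1), $c$ is strictly increasing in $\sigma_{-1}$ on deficient inputs. Since $mn$ is deficient and $n>1$, Proposition~\ref{prop:abundance} forces $m$ deficient as well; moreover every divisor of $m$ is a divisor of $mn$, while $mn>m$ is itself a divisor of $mn$ but not of $m$. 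Hence $\sigma_{-1}(mn)>\sigma_{-1}(m)$ (both lying in $(0,2)$), and $c(mn)>c(m)$ follows.

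For part~(3), I would first note that $p^e$ is deficient for every prime $p$ and $e\geq 1$, since $\sigma_{-1}(p^e)=\frac{p^{e+1}-1}{(p-1)p^e}<\frac{p}{p-1}\leq 2$. Then the decomposition $p^{e+1}=p^e\cdot p$ with factor $p>1$ and $p^{e+1}$ deficient lets me invoke~(2) to conclude $c(p^{e+1})>c(p^e)$, giving monotonicity. For the limit, I compute $c(p^e)=\frac{p^{e+1}-1}{p^{e+1}-2p^e+1}$ directly from the formulas for $\sigma(p^e)$ and $d(p^e)$; dividing numerator and denominator by $p^{e+1}$ and sending $e\to\infty$ yields $p/(p-2)$ (with the case $p=2$ degenerating to $+\infty$, consistent with the direct computation $c(2^e)=2^{e+1}-1$). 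For part~(4), since $p>c(m)$, Proposition~\ref{prop:deficient} guarantees $mp$ deficient, and the same identity gives $d(mp)=d(m)p-\sigma(m)$, so
\[
c(mp)=\frac{\sigma(m)(p+1)}{d(m)p-\sigma(m)}=\frac{(p+1)\,c(m)}{p-c(m)}.
\]
The auxiliary function $f(x)=(x+1)/(x-c(m))$ has derivative $-(1+c(m))/(x-c(m))^2<0$ on $(c(m),+\infty)$, so $f$ is strictly decreasing there; applied with $q>p>c(m)$ this yields $c(mq)<c(mp)$.

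The arguments are essentially routine algebraic manipulation plus monotonicity, so I do not expect a substantive obstacle. The only point requiring care is the bookkeeping that ensures each $c(\cdot)$ we write is well-defined (i.e., that each intermediate number is actually deficient) at the moment it appears; this is handled uniformly by Propositions~\ref{prop:abundance} and~\ref{prop:deficient}.
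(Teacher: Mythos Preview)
Your proof is correct and follows essentially the same route as the paper: both derive~(1) by direct algebra, obtain~(2) from the monotonicity of $x\mapsto 1/(2/x-1)$ together with $\sigma_{-1}(mn)>\sigma_{-1}(m)$, handle~(3) via the explicit formula for $\sigma_{-1}(p^e)$ (you additionally shortcut the monotonicity by invoking~(2), which is a nice touch), and treat~(4) as the algebraic manipulation the paper only alludes to---your explicit function $f(x)=(x+1)/(x-c(m))$ and its derivative make that step fully transparent. The only cosmetic difference is that you supply more detail than the paper in parts~(2) and~(4), including the well-definedness checks for $c(\cdot)$.
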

\begin{proof}
	For \eqref{prop:center:alt}, we have
	\[
	c(m) = \frac{\sigma(m)}{2m-\sigma(m)} = \frac{\sigma(m)-2m+2m}{2m-\sigma(m)} =
	\frac{2m}{d(m)} -1
	\]
	Moreover
	\[
	c(m) = \frac{1}{\displaystyle\frac{2m - \sigma(m)}{\sigma(m)}} = \frac{1}{\displaystyle\frac{2}{\sigmam(m)} - 1}
	\]
	If we restrict ourselves to deficient numbers, ensuring $\sigmam(m) < 2$, we have that $c(m)$ is increasing with $\sigmam(m)$. Since  $\sigmam(mn) > \sigmam(m)$, under the hypothesis of this proposition, we have $c(mn) > c(m)$, proving \eqref{prop:center:mult}.
	From
	\[
	\sigmam(p^e) = \frac{1-\displaystyle\left(\frac{1}{p}\right)^{e+1}}{1-\displaystyle\frac{1}{p}},\qquad
	\lim_{e \rightarrow +\infty} \sigmam(p^e) = \frac{1}{1-\displaystyle\frac{1}{p}} = \frac{p}{p-1}
	\]
	we obtain that $c(p^e)$ is increasing in $e$ and
	\[
	\lim_{e \rightarrow +\infty} c(p^e) = \frac{1}{\displaystyle\frac{2(p-1)}{p} - 1}  = \frac{p}{p-2}
	\]
	thus proving \eqref{prop:center:inc}. To prove\eqref{prop:center:dec}, note that the hypothesis implies that $d(mp)$ and $d(mq)$ are positive, by Proposition~\ref{prop:deficient}, and conclude by an algebraic manipulation of $c(mq)<c(mp)$.
\end{proof}

We want to give appropriate conditions ensuring that $mp$ is primitive abundant. We know from Corollary~\ref{cor:primdiv} and Proposition~\ref{prop:deficient}, that a necessary condition for $mp^e$ to be primitive abundant is $\frac{p^e}{\sigma(p^{e-1})} > c(m/q)$ for each prime $q \div m$. Since our aim is to implement a program to enumerate PAN (see Section~\ref{sec:abundant-enumerate}), we would like to reduce the number of tests we need to perform each time. The following will be useful.

\begin{thm}[Structure Theorem for PAN]
	\label{thm:primitive-weird}
	Let $m$ be a deficient number, $e \in \NN$ and $p$ a prime such that $(m,p)=1$. Then $mp^e$ is primitive abundant iff $p^e/\sigma(p^{e-1}) < c(m)$,  $p^e/\sigma(p^{e-1}) > \displaystyle\frac{\sigma(m)}{d(m) + \displaystyle\frac{2m}{\sigma(q^\alpha)-1}}$ for each $q^\alpha \entdiv m$, and either $e=1$ or $p^{e-1}/\sigma(p^{e-2}) > c(m)$.
\end{thm}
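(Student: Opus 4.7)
The plan is to reduce both directions of the equivalence to two already-available tools: Proposition~\ref{prop:deficient}, which converts abundance/deficiency of $m'p^e$ (with $m'$ deficient) into an inequality on $p^e/\sigma(p^{e-1})$ relative to $c(m')$, and Corollary~\ref{cor:primdiv}, which states that a number is primitive abundant as soon as it is abundant and each of its prime-quotient divisors is deficient. Each of the three stated conditions should then correspond to one of these checks: (1) abundance of $mp^e$ itself, (3) deficiency of the quotient by $p$, and (2) deficiency of the quotient by a prime of $m$.

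Condition~(1) is literally the Proposition~\ref{prop:deficient} translation of ``$mp^e$ is abundant''. Condition~(3) controls $mp^e/p = mp^{e-1}$: for $e = 1$ the quotient is $m$, deficient by hypothesis, whereas for $e \ge 2$ Proposition~\ref{prop:deficient} applied with exponent $e-1$ yields the stated inequality. The real work is in identifying condition~(2) as the deficiency condition for $mp^e/q$, one $q^\alpha \entdiv m$ at a time. Since $m/q$ is not assumed deficient, Proposition~\ref{prop:deficient} is not directly applicable: I would instead expand $2(mp^e/q) > \sigma(mp^e/q)$ by hand. Writing $m = q^\alpha m'$ with $(q,m')=1$, using multiplicativity of $\sigma$ together with $\sigma(q^{\alpha-1}) = (\sigma(q^\alpha)-1)/q$ and $\sigma(p^e) = \sigma(p^{e-1}) + p^e$, and then rearranging, this inequality becomes
\[
\frac{p^e}{\sigma(p^{e-1})} > \frac{\sigma(m)\bigl(\sigma(q^\alpha)-1\bigr)}{d(m)\sigma(q^\alpha) + \sigma(m)}.
\]
The identity $d(m)\sigma(q^\alpha) + \sigma(m) = d(m)\bigl(\sigma(q^\alpha) - 1\bigr) + 2m$, immediate from $d(m) = 2m - \sigma(m)$, rewrites the right-hand side as $\sigma(m)\big/\bigl(d(m) + 2m/(\sigma(q^\alpha)-1)\bigr)$, matching condition~(2) exactly.

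With the three equivalences in place, both directions close immediately. For $(\Rightarrow)$, primitivity of the PAN $mp^e$ forces every proper prime quotient to be deficient, yielding (2) and (3), while abundance of $mp^e$ itself yields (1). For $(\Leftarrow)$, condition~(1) supplies abundance via Proposition~\ref{prop:deficient}, and conditions~(2) and~(3) supply deficiency of $mp^e/r$ for each prime $r \div mp^e$, so Corollary~\ref{cor:primdiv} concludes. The main obstacle is the algebraic manipulation behind condition~(2): nothing is deep, but one must keep the block $q^\alpha$ separated from $m'$, reintroduce $\sigma(m)$ and $\sigma(q^\alpha)$ in place of $\sigma(m')$ and $\sigma(q^{\alpha-1})$, and finally spot the rearrangement that produces the specific form of the denominator printed in the statement.
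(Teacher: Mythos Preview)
Your proof is correct and follows essentially the same route as the paper: reduce primitivity via Corollary~\ref{cor:primdiv} to abundance of $mp^e$ and deficiency of each prime quotient, translate these via Proposition~\ref{prop:deficient}, and compute $c(m/q)$ in the form stated. One small remark: your worry that ``$m/q$ is not assumed deficient'' is unfounded, since any proper divisor of the deficient number $m$ is deficient by (the contrapositive of) Proposition~\ref{prop:abundance}; the paper simply applies Proposition~\ref{prop:deficient} to $m/q$ and then rewrites $c(m/q)$, arriving at the same formula you obtain by direct expansion.
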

\begin{proof}
	When $e>1$, by Corollary~\ref{cor:primdiv} and Proposition~\ref{prop:deficient}, we have that $mp^e$ is primitive abundant iff $p^e/\sigma(p^{e-1}) < c(m)$, $p^{e-1}/\sigma(p^{e-2}) > c(m)$ and $p^e / \sigma(p^{e-1}) > c(m/q)$ for each prime $q \div m$. If $e=1$, we have a similar result without the second condition. We prove that, if $q^\alpha \entdiv m$, then $c(m/q) = \displaystyle\frac{\sigma(m)}{d(m) + \displaystyle\frac{2m}{\sigma(q^\alpha)-1}}$. Let $\beta = \sigma(q^\alpha)/\sigma(q^{\alpha-1}) = \displaystyle\frac{1+ \dots + q^{\alpha}}{1+ \dots + q^{\alpha-1}}$. We have:
	\begin{multline}
	\label{eq:prop-primitive}
	c(m/q)=\frac{\sigma(m/q)}{d(m/q)} = \frac{\sigma(m)/\beta}{2m/q - \sigma(m)/\beta} = \frac{\sigma(m)}{2m\beta/q - \sigma(m)} = \\
	\frac{\sigma(m)}{\displaystyle 2m \frac{1+ \dots + q^{\alpha}}{q+ \dots + q^{\alpha}} - \sigma(m) } =  
	\frac{\sigma(m)}{\displaystyle 2m \left(1 + \frac{1}{q+ \dots + q^{\alpha}}\right) - \sigma(m)} = \frac{\sigma(m)}{d(m) + \displaystyle\frac{2m}{\sigma(q^\alpha)-1}}
	\end{multline}
	This concludes the proof.
\end{proof}

Since the expression on the r.h.s.~of \eqref{eq:prop-primitive} is increasing on $\sigma(q^\alpha)$, we can just keep track of the largest $\sigma(q^\alpha)$ of all the $q^\alpha$'s entirely dividing $m$. For computational reasons, the following variant of  \eqref{eq:prop-primitive} might be more efficient, since it only involves integer numbers:
\begin{equation}
\label{eq:cmq}
c(m/q) =   \frac{\sigma(m) - \displaystyle\frac{\sigma(m)}{\sigma(q^\alpha)}}{d(m) + \displaystyle\frac{\sigma(m)}{\sigma(q^\alpha)}}
\end{equation}

The following corollary has been already proved in \cite{AHMPWN}. We give here a different proof based on Theorem~\ref{thm:primitive-weird}.
\begin{cor}
	\label{cor:primitiveweird}
	If $m$ is deficient, $p$ is a prime such that $(m,p)=1$, $p < c(m)$ and $p  \geq \sigma(q^\alpha) - 1$ for each $q^\alpha \entdiv m$, then $mp$ is primitive abundant.
\end{cor}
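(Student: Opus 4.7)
The plan is to invoke the Structure Theorem for PAN (Theorem~\ref{thm:primitive-weird}) in the simplest case, namely $e=1$. With $e=1$, the exponent condition $p^{e-1}/\sigma(p^{e-2}) > c(m)$ disappears, and the ratio $p^e/\sigma(p^{e-1})$ reduces to $p$. So the theorem says that $mp$ is primitive abundant iff the two inequalities
\[
p < c(m) \qquad\text{and}\qquad p > \frac{\sigma(m)}{d(m) + \dfrac{2m}{\sigma(q^\alpha)-1}} \quad\text{for every } q^\alpha \entdiv m
\]
both hold. The first is given by hypothesis, so the whole task reduces to deriving the second from the assumption $p \geq \sigma(q^\alpha)-1$.

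Next, I would clear denominators. Since $m$ is deficient we have $d(m)>0$, and $\sigma(q^\alpha)-1\geq q^\alpha\geq 2$, so the denominator $d(m) + 2m/(\sigma(q^\alpha)-1)$ is strictly positive and the inequality to prove is equivalent to
\[
p\cdot d(m) + \frac{2m\,p}{\sigma(q^\alpha)-1} > \sigma(m).
\]
Using the hypothesis $p\geq \sigma(q^\alpha)-1$, the second term on the left is at least $2m$, so the left-hand side is at least $p\cdot d(m) + 2m \geq d(m)+2m$ (recall $p\geq 2$, so even $p\geq 1$ suffices). Finally, $d(m)+2m = \sigma(m)+2d(m) > \sigma(m)$ because $d(m)>0$, which closes the argument.

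So the proof is essentially a two-line calculation once one has identified the right instance of Theorem~\ref{thm:primitive-weird} and the explicit formula~\eqref{eq:prop-primitive} for $c(m/q)$. There is no real obstacle: the only thing worth double-checking is that the strict inequality $p>c(m/q)$ comes out strict (it does, thanks to $d(m)>0$) and that the denominator in $c(m/q)$ is genuinely positive, which is guaranteed by $m$ being deficient.
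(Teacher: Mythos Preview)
Your proof is correct and follows essentially the same route as the paper: invoke Theorem~\ref{thm:primitive-weird} with $e=1$ and verify the second inequality by an elementary manipulation exploiting $d(m)>0$ and $p\geq\sigma(q^\alpha)-1$. The only cosmetic difference is that the paper factors to show the right-hand side is strictly less than $\sigma(q^\alpha)-1$, whereas you clear denominators and bound from the left; the underlying computation is the same.
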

\begin{proof}
	By Theorem~\ref{thm:primitive-weird}, $mp$ is primitive abundant whenever $ p > \displaystyle\frac{\sigma(m)}{d(m) + \displaystyle\frac{2m}{\sigma(q^\alpha)-1}}$ for each $q^\alpha \entdiv m$. We have
	\[
	\frac{\sigma(m)}{\displaystyle d(m) + \frac{2m}{\sigma(q^\alpha)-1}} = (\sigma(q^\alpha)-1) \frac{2m - d(m)}{d(m) (\sigma(q^\alpha)-1) + 2m} < \sigma(q^\alpha)-1\qedhere
	\] 
\end{proof}

\begin{remark}
	\label{rem:notprimitive}
	Due to the approximations in the previous proof, it is evident that the condition $p \geq \sigma(q^\alpha)-1$ is sufficient but not necessary. Consider $m=8$ and $p=7$. Although $7 < \sigma(8) -1$, it turns out that $8 \cdot 7$ is primitive abundant.
\end{remark}

The test for primitiveness in the case of square-free abundant numbers is particular easy, given the following:

\begin{cor}
	\label{cor:primitiveweird2}
	If $p_1 < \dots < p_k$ are primes such that $m = p_1 \cdots p_k$ is deficient, $p > p_k$ is a prime such that $mp$ is abundant, then $mp$ is primitive abundant.
\end{cor}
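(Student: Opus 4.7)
The plan is to apply Corollary~\ref{cor:primdiv}: it suffices to verify that $mp/q$ is deficient for every prime divisor $q$ of $mp$. Since $mp = p_1 \cdots p_k \cdot p$ is square-free, the prime divisors are exactly $p_1, \ldots, p_k, p$. For $q = p$ the statement is trivial, because $mp/p = m$ is deficient by hypothesis.

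The real work is the case $q = p_i$ for some $i \leq k$. I would set $m' \eqdef m/p_i$ and show two things in turn: first, that $m'$ is itself deficient; second, that $m' p$ is deficient. For the first point I would invoke the following immediate consequence of Proposition~\ref{prop:abundance}: every proper divisor of a deficient number is deficient. Indeed, if $m'$ were non-deficient, then $m = m' \cdot p_i$ (with $p_i > 1$) would be abundant by Proposition~\ref{prop:abundance}, contradicting the deficiency of $m$.

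For the second point, since $m'$ is deficient and $m = m' \cdot p_i$ is also deficient with $(m',p_i)=1$, Proposition~\ref{prop:deficient} applied with $e=1$ gives $p_i > c(m')$. Combined with the hypothesis $p > p_k \geq p_i$, this yields $p > c(m')$, and a second application of Proposition~\ref{prop:deficient} (again with $e=1$) shows that $m' p = mp/p_i$ is deficient, as required. Corollary~\ref{cor:primdiv} then closes the argument.

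I do not foresee a serious obstacle here: the only small subtlety is recognizing that the square-free hypothesis is what lets us reduce the primitiveness test to checking deficiency after removing a \emph{single} prime factor, and that the chain of inequalities $p > p_k \geq p_i > c(m/p_i)$ transfers the deficiency condition from $m$ to $(m/p_i)\,p$ uniformly in $i$.
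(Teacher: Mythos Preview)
Your argument is correct. It differs from the paper's route, though: the paper simply observes that in the square-free case each $q^\alpha \entdiv m$ is some $p_i$ with $\alpha=1$, so $\sigma(p_i)-1=p_i\le p_k<p$, and then invokes Corollary~\ref{cor:primitiveweird} (which in turn rests on the Structure Theorem~\ref{thm:primitive-weird}). You instead bypass both Corollary~\ref{cor:primitiveweird} and Theorem~\ref{thm:primitive-weird} entirely, working directly from the primitiveness test of Corollary~\ref{cor:primdiv} together with the trichotomy of Proposition~\ref{prop:deficient}, via the chain $p>p_k\ge p_i>c(m/p_i)$. Your approach is more self-contained and arguably more transparent for this special case; the paper's is shorter because the heavy lifting has already been packaged into Corollary~\ref{cor:primitiveweird}.
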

\begin{proof}
	Since $p > p_k$ then $(m,p)=1$. Moreover, for each $p_i$, we have $p_i \entdiv m$, hence $p \geq \sigma(p_i) -1 = p_i$. The thesis follows from  Corollary~\ref{cor:primitiveweird}.
\end{proof}

\subsection{Adding any prime factor to a deficient number}

We now consider the case when we start with a deficient number $m$ and add a prime factor $p$ with $p^\alpha \entdiv m$. We want to study under which conditions $mp$ is perfect, (primitive) abundant or deficient.

First of all, consider that Proposition~\ref{prop:deficient} does not hold when $(m,p) \neq 1$. For example, for $m=10=2 \cdot 5$ we have $c(m)=9$ but $2 \cdot 5^2$ is deficient. We may change Proposition~\ref{prop:deficient} in the following way:
\begin{prop}
	\label{prop:deficientbis}
	If $m$ is deficient and $p$ is a prime such that $p^\alpha \entdiv m$, then
	\begin{itemize}
		\item if $p\sigma(p^\alpha) <c(m)$ then $mp$ is abundant;
		\item if $p\sigma(p^\alpha) = c(m)$ then $mp$ is perfect;
		\item if $p\sigma(p^\alpha) > c(m)$ then $mp$ is deficient.
	\end{itemize}
\end{prop}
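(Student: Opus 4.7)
The plan is to mimic the proof of Proposition~\ref{prop:deficient}: express $d(mp)$ in closed form in terms of $d(m)$, $\sigma(m)$, and the local data at $p$, and then read off the sign of $d(mp)$.

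First I would factor $m = p^\alpha m'$ with $(m',p) = 1$, so $mp = p^{\alpha+1} m'$. By multiplicativity of $\sigma$ (Equation~\eqref{eq:sigma}), $\sigma(mp) = \sigma(p^{\alpha+1})\sigma(m')$ and $\sigma(m) = \sigma(p^\alpha)\sigma(m')$, so $\sigma(m') = \sigma(m)/\sigma(p^\alpha)$.

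Next I would substitute into $d(mp) = 2mp - \sigma(mp) = 2p^{\alpha+1}m' - \sigma(p^{\alpha+1})\sigma(m')$ and compare with $d(m) = 2p^\alpha m' - \sigma(p^\alpha)\sigma(m')$. Using $\sigma(p^{\alpha+1}) = \sigma(p^\alpha) + p^{\alpha+1}$ together with the elementary identity $(p-1)\sigma(p^\alpha) = p^{\alpha+1} - 1$, the cross terms telescope and one obtains
\[
d(mp) = p\, d(m) - \sigma(m')  = p\, d(m) - \frac{\sigma(m)}{\sigma(p^\alpha)},
\]
or equivalently $\sigma(p^\alpha)\, d(mp) = p\,\sigma(p^\alpha)\, d(m) - \sigma(m)$.

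Since $m$ is deficient, $d(m) > 0$ and $\sigma(p^\alpha) > 0$, hence the sign of $d(mp)$ agrees with the sign of $p\,\sigma(p^\alpha) - \sigma(m)/d(m) = p\,\sigma(p^\alpha) - c(m)$. The three cases of the statement follow: $mp$ is abundant, perfect, or deficient according to whether $p\,\sigma(p^\alpha)$ is less than, equal to, or greater than $c(m)$.

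The only slightly delicate step is the bookkeeping in the second paragraph, where one must keep $\sigma(p^\alpha)$ and $\sigma(p^{\alpha+1})$ straight and spot that the telescoping depends on the identity $(p-1)\sigma(p^\alpha) = p^{\alpha+1} - 1$; this is what replaces the simpler relation used in Proposition~\ref{prop:deficient}, where $(p,m)=1$ lets one write $d(mp^e) = d(m)p^e - \sigma(m)\sigma(p^{e-1})$ directly. Beyond this, the proof is routine.
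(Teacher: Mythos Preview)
Your proof is correct and follows essentially the same approach as the paper: both derive the key identity $d(mp) = p\,d(m) - \sigma(m)/\sigma(p^\alpha)$ and then read off the sign of $d(mp)$ in terms of $p\sigma(p^\alpha) - c(m)$. The only cosmetic difference is that the paper writes $\sigma(mp) = \sigma(m)\frac{p^{\alpha+2}-1}{p^{\alpha+1}-1}$ and simplifies from there, whereas you factor out $m'$ first; the algebra and the telescoping identity $(p-1)\sigma(p^\alpha)=p^{\alpha+1}-1$ are the same.
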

\begin{proof}
	We have that $d(mp)=2mp - \sigma(mp)= 2mp - \sigma(m) \frac{p^{\alpha+2}-1}{p^{\alpha+1}-1} = 2mp - \sigma(m)(p + \frac{p-1}{p^{\alpha+1}-1}) = d(m) p - \sigma(m) \frac{p-1}{p^{\alpha+1}-1} = d(m) p -\sigma(m) / \sigma(p^\alpha)$.
\end{proof}

\begin{example}
	If $m= 2 \cdot 5$, we have $c(m)=9$ but $5 \cdot \sigma(5)= 30$ hence $m \cdot 5$ is deficient. If $m = 2\cdot 5 \cdot 13 \cdot 61 \cdot 67$ we have $5651 < c(m) < 5652$. Since $61 \cdot \sigma(61) = 3782 < c(m)$, we have that $m \cdot 61$ is abundant.
\end{example}



We may also adapt Theorem~\ref{thm:primitive-weird} to the case when $p$ is not coprime with $m$ as follows:
\begin{thm}
	\label{thm:primitive-weird2}
	If $m$ is deficient and $p$ is a prime such that $p^\alpha \entdiv m$, we have that $mp$ is primitive abundant iff $p \sigma(p^\alpha) < c(m)$ and $p \sigma(p^\alpha) > \displaystyle\frac{\sigma(m)}{d(m) + \displaystyle\frac{2m}{\sigma(q^\beta)-1}}$ for each $q^\beta \entdiv m$ with $q \neq p$.
\end{thm}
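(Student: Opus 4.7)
The plan is to mirror the proof of Theorem~\ref{thm:primitive-weird} but using Proposition~\ref{prop:deficientbis} in place of Proposition~\ref{prop:deficient}, since now $p$ shares a factor with $m$. By Corollary~\ref{cor:primdiv}, the number $mp$ is primitive abundant iff $mp$ is abundant and $mp/r$ is deficient for every prime $r \div mp$; conversely, primitiveness forces every such $mp/r$ to be deficient, so the characterization is an ``iff''.

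First, I will handle abundance: by Proposition~\ref{prop:deficientbis} applied to $m$ with the prime $p$ (so $p^\alpha \entdiv m$), the condition $mp$ abundant is equivalent to $p\sigma(p^\alpha) < c(m)$. Second, I will check divisors. The primes of $mp$ are $p$ itself and the primes $q \neq p$ with $q^\beta \entdiv m$. For $r = p$, we get $mp/p = m$, which is deficient by hypothesis, so this case is automatic. For $r = q$ with $q \neq p$, I will first observe that $m/q$ is a proper divisor of the deficient number $m$, hence deficient (by Proposition~\ref{prop:abundance}, any non-deficient proper divisor would force $m$ itself to be abundant). Since $q \neq p$, we still have $p^\alpha \entdiv m/q$, so Proposition~\ref{prop:deficientbis} applies to $m/q$ and yields: $mp/q = (m/q)\cdot p$ is deficient iff $p\sigma(p^\alpha) > c(m/q)$.

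The last step is to rewrite $c(m/q)$ in terms of the data of $m$. This was already carried out inside the proof of Theorem~\ref{thm:primitive-weird}, culminating in equation~\eqref{eq:prop-primitive}:
\[
c(m/q) = \frac{\sigma(m)}{d(m) + \displaystyle\frac{2m}{\sigma(q^\beta)-1}}.
\]
Thus the condition that $mp/q$ be deficient becomes exactly the inequality in the statement. Putting the three conditions together through Corollary~\ref{cor:primdiv} gives the claimed characterization.

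There is no real obstacle here: the only subtle point is remembering that $m/q$ remains deficient and still contains $p^\alpha$ exactly, so Proposition~\ref{prop:deficientbis} can be reapplied with the same $\alpha$; the algebraic identity for $c(m/q)$ does not need to be redone since it is already displayed in \eqref{eq:prop-primitive}. The proof is therefore a short combination of Proposition~\ref{prop:deficientbis}, Corollary~\ref{cor:primdiv}, and equation~\eqref{eq:prop-primitive}.
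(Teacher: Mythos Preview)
Your proposal is correct and follows essentially the same route as the paper: the paper's proof likewise combines Corollary~\ref{cor:primdiv} with Proposition~\ref{prop:deficientbis} to reduce to the conditions $p\sigma(p^\alpha) < c(m)$ and $p\sigma(p^\alpha) > c(m/q)$ for $q\neq p$, and then invokes the identity \eqref{eq:prop-primitive} already established in the proof of Theorem~\ref{thm:primitive-weird}. You are simply a bit more explicit in noting that the $r=p$ case is automatic and that $m/q$ remains deficient with $p^\alpha \entdiv m/q$, points the paper leaves implicit.
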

\begin{proof}
	By Corollary~\ref{cor:primdiv} and Proposition~\ref{prop:deficientbis}, it is immediate that $mp$ is primitive abundant iff $p \sigma(p^\alpha) < c(m)$ and $p \sigma(p^\alpha) > c(m/q)$ for each prime $q \div m$ with $q \neq p$. In the proof of Theorem~\ref{thm:primitive-weird} we have shown that $c(m/q) = \displaystyle\frac{\sigma(m)}{d(m) + \displaystyle\frac{2m}{\sigma(q^\beta)-1}}$.
\end{proof}

We may repeat the same considerations we have made for Theorem~\ref{thm:primitive-weird}, regarding the fact we might only consider the largest $q^\beta \entdiv m$. Moreover, Equation~\ref{eq:cmq} still holds.

\section{Enumerating primitive abundant numbers}
\label{sec:abundant-enumerate}

%

Theorems~\ref{thm:primitive-weird} and \ref{thm:primitive-weird2} allow us to devise an algorithm for enumerating PAN or, more generally, primitive non-deficient numbers.
We will enumerate PAN on the basis of their factorization. For this reason, when $m=p_1^{e_1} \dotsm p_k^{e_k}$, we will always assume $p_1 < \dots < p_k$. Moreover, we will denote with $\omega(m) \eqdef k$ the number of distinct prime factors in $m$ and with $\Omega(m) \eqdef e_1 + \dots + e_k$ the number of prime factors in $m$ counted with their multiplicity.

Note that, if we fix the number of prime factors counted with multiplicity, then enumeration terminates, thanks to the following results.
\begin{lem}
    Given a number $m$ and $k \geq 0$, there are only finitely many PAN of the form $mn$ with $(m,n)=1$ and $\Omega(n)=k$.
\end{lem}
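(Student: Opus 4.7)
My approach is by strong induction on $k$, treating $m$ as a free parameter so that the inductive hypothesis may later be applied with a different base. The case $k=0$ forces $n=1$, so there is at most one candidate and the conclusion is immediate.

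For the inductive step, fix $k \geq 1$ and suppose $mn$ is a PAN with $(m,n)=1$ and $\Omega(n)=k$. If $m$ were non-deficient, then $m$ would be a proper non-deficient divisor of $mn$, contradicting the characterisation of primitivity underlying Corollary~\ref{cor:primdiv}; hence $\sigma_{-1}(m) < 2$ and $L := 2/\sigma_{-1}(m) > 1$. The abundance of $mn$ together with the multiplicativity of $\sigma_{-1}$ on coprime factors yields $\sigma_{-1}(n) \geq L$. Writing $n = q_1^{a_1} \cdots q_r^{a_r}$ with $q_1 < \dots < q_r$, the sub-multiplicativity of $\sigma_{-1}$ recorded just after~\eqref{eq:sigma} gives
\[
\sigma_{-1}(n) \;\leq\; \prod_{i=1}^{r} (1+1/q_i)^{a_i} \;\leq\; (1+1/q_1)^{k},
\]
since $q_i \geq q_1$ and $\sum_i a_i = k$. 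Comparing with $\sigma_{-1}(n) \geq L > 1$ produces the quantitative bound $q_1 \leq 1/(L^{1/k}-1)$, so $q_1$ lies in a finite set determined only by $m$ and $k$.

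To close the induction, write $n = q_1^{a_1} n''$ with $(q_1, n'') = 1$. Then $(m q_1^{a_1}, n'') = 1$ and $\Omega(n'') = k - a_1 < k$, so the inductive hypothesis applied to the new base $m' := m q_1^{a_1}$ and exponent $k' := k - a_1$ yields only finitely many admissible $n''$. Summing over the finitely many choices of $q_1$ and of $a_1 \in \{1, \dots, k\}$ produces finitely many $n$, and hence finitely many PAN of the claimed form. The only substantive step is the sub-multiplicativity bound that pins the smallest prime factor $q_1$ to a finite range; once this is in place the recursion is routine bookkeeping, and one could alternatively deduce the lemma from Dickson's finiteness theorem (Theorem~\ref{thm:dick}) by observing that the exponent of $2$ in $mn$ is bounded by $\Omega(m)+k$.
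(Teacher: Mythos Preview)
Your proof is correct and follows essentially the same route as the paper's: induction on $k$, bounding the smallest prime $q_1$ of $n$ via sub-multiplicativity of $\sigma_{-1}$ against $2/\sigma_{-1}(m)$, then recursing with base $m q_1^{a_1}$ and exponent $k-a_1$. One small caution on your closing aside: citing Theorem~\ref{thm:dick} as an alternative derivation is circular in the paper's logical order, since that theorem is obtained \emph{from} this lemma (the independent route would be to invoke Dickson's original results \cite{DicFOP,DicEAN} directly).
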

\begin{proof}
    By induction on $k$. For $k=0$ the result is trivial, either $m$ is primitive abundant and $n=1$ or it is not. If $k>1$, we distinguish whether $m$ is deficient or not. If  $m$ is not deficient, then $mn$ is never primitive abundant and the lemma holds. If $m$ is deficient, consider an $n$ such that $mn$ is primitive abundant and $\Omega(n)=k$. Then $n$ has the form $p_1^{e_1} \dotsm p_\ell^{e_\ell}$ with $p_1 < \cdots < p_\ell$ and $\sum_{i=1}^\ell e_i =k$. Since $mn$ is abundant, $\sigmam(n) > 2 / \sigmam(m)$. However, the abundancy of $n$ is bounded by
    \[
    \sigmam(n) = \sigmam(p_1^{e_1}) \dotsm \sigmam(p_\ell^{e_\ell}) \leq \sigmam(p_1^{e_1}) \dotsm \sigma_\ell(p_1^{e_\ell}) \leq (1/p_1+1)^k 
    \]
    Therefore, $(1/p_1+1)^k > 2/\sigmam(m)$, i.e., $1/p_1 > \sqrt[k]{2/\sigma(m)} - 1$. Since $m$ is deficient, $\sigmam(m) < 2$. Hence, the right hand side of this inequality is positive and $p_1$ is bounded from the above. Given one of the finitely many $p_1$ satisfying this condition and $e \in \{1, \dotsc, k\}$, by inductive hypothesis there are only finitely many $n'$ coprime with $mp^e$, with $\Omega(n')=k-e$ and such that $m p^e n$ is abundant. Varying $p$, these cover all possible values of $n$ in the statement of this lemma.
\end{proof}

\begin{thm}
    \label{thm:dick}
    For any $k>1$, there are only finitely many PAN $n$ with $\Omega(n)=k$.
\end{thm}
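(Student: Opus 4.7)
The plan is to derive this theorem as an immediate corollary of the preceding lemma, specialized to the case $m = 1$. Since $1$ is deficient (trivially, $\sigma(1)/1 = 1 < 2$) and $\gcd(1, n) = 1$ holds for every $n \in \NN$, each PAN $n$ with $\Omega(n) = k$ can be written in the form $1 \cdot n$ with $(1, n) = 1$ and $\Omega(n) = k$. The hypotheses of the lemma are thus satisfied, and its conclusion yields the desired finiteness at once. No substantive new work is required beyond invoking the lemma.

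Because the real content already sits inside the lemma, no genuine obstacle arises at this stage; if anything, the only point worth commenting on is the restriction $k > 1$ in the statement. Unwinding the lemma's inductive bound at $m = 1$, the smallest prime factor $p_1$ of any PAN $n$ with $\Omega(n) = k$ must satisfy $(1/p_1 + 1)^k \geq \sigmam(n) > 2/\sigmam(1) = 2$, i.e.\ $p_1 < 1/(2^{1/k} - 1)$. This bound is meaningful for $k \geq 2$ and becomes $p_1 < 1$ for $k = 1$, recovering the fact that every $n$ with $\Omega(n) = 1$ is prime, hence deficient, so the $k = 1$ case is vacuous and is excluded purely for neatness of the statement.
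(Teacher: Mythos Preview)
Your proof is correct and follows exactly the paper's approach: the paper's own proof consists of the single sentence ``This follows immediately from the previous lemma for $m=1$.'' Your additional remarks on why $m=1$ satisfies the hypotheses and on the role of the restriction $k>1$ are correct elaborations but go beyond what the paper records.
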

\begin{proof}
    This follows immediately from the previous lemma for $m=1$.
\end{proof}

We remark that Theorem~\ref{thm:dick} is a Corollary of~\cite{DicFOP,DicEAN} about finiteness of PAN with a fixed number of odd prime factors (counted without multiplicity) and a fixed power of 2.


\subsection{Square-free PAN}\label{sec:SFPANs}

We consider the special case of enumerating square-free PAN (SFPAN in the rest of the paper) with $k$ prime factors. Note that the more general case of primitive square-free non-deficient numbers is not interesting, since it is well-known that there is only one square-free perfect number which is 6.



The algorithm is a recursive procedure which takes a deficient number $m = \bar p_1 \dotsm \bar p_r$ with $\bar p_1 < \dots < \bar p_r$ and $r < k$ as input. Initially $m=1$. If $r < k-1$, for each prime $p  > c(m)$ we consider the number $\widetilde m = m p$, which is deficient by Proposition~\ref{prop:deficient}, and recursively call the procedure. If $r = k-1$, then we consider all primes $p$ contained in the possibly empty  open interval $(\bar p_r, c(m))$. By Corollary~\ref{cor:primitiveweird2}, each number of the form $m p$ is a PAN.

The algorithm needs a stopping condition in the case $r < k-1$, since we cannot actually test all the countably infinite primes $p > c(m)$. We decide to try primes in increasing order, stopping as soon as we find a $p$ such that there are no PAN starting with $m p$.  The complete description may be found in Algorithm~\ref{algo:sfpan}. The algorithm is easily checked to be correct. Completeness, i.e., the fact that the algorithm finds all SFPAN of the chosen form, will be discussed later.

\begin{algorithm}
	\LinesNumbered
	\SetKwFunction{sfpan}{sfpan}
	\SetKwData{count}{count}
	\SetKwData{innerCount}{innerCount}
	\Fn{\sfpan{k: nat, m: nat}}{
		\KwIn{$k$ is a natural number; $m = \bar p_1 \dotsm \bar p_r$  is a square-free deficient number, with $\bar p_1 < \dots < \bar p_r$ }
		\KwOut{all primitive abundant numbers of the form $m \cdot p_1 \dotsm  p_k$, with $\bar p_r < p_1 < \dots <  p_k$}
		\KwResult{the number of square-free primitive abundant numbers of the form above}
		\BlankLine
		\count $\leftarrow$ 0\;
		\eIf{$k=1$}{
			\ForEach{$p$ prime s.t. $\bar p_r < p < c(m)$}{
				\Print{mp}\;
				\count $\leftarrow$ \count + 1
			}
			\Return \count
		}{
			\ForEach{$p$ prime s.t. $p > \max(\bar p_r, c(m))$}{
				\innerCount $\leftarrow$ \sfpan{k-1, mp}\;
				\count $\leftarrow$ \count + \innerCount\;
				\If {$\innerCount = 0$} {
					\Return \count
				}
			}
		}
	}
	\caption{\label{algo:sfpan}Enumerating SFPAN with $k$ prime factors.}
\end{algorithm}

When we only want to count PAN, steps 3--6 of the algorithm may be replaced by a prime counting function. Using an implementation in SageMath of the algorithm and the prime counting function provided by Kim Walisch's \href{https://github.com/kimwalisch/primecount}{\texttt{primecount}} library, we managed to count the number of SFPAN  from $1$ up to $7$ distinct prime factors and odd SFPAN from $1$ up to $8$ distinct prime factors. The result is shown in Table~\ref{tab:sfpan-count} and form the OEIS sequences \href{https://oeis.org/A295369}{A295369} and \href{https://oeis.org/A287590}{A287590}.

We have also computed a list of SFPAN with up to 6 distinct prime factors, which is available on GitHub at \repourl.

\begin{table}[ht]
    \begin{tabular}{ccc}
        $\boldsymbol{\omega}$ & \textbf{\# all } & \textbf{\# odd}\\
        \hline
        1 & 0 & 0\\
        2 & 0 & 0\\
        3 & 1 & 0\\
        4 & 18 & 0\\
        5 & 610 & 87\\
        6 & 216054 & 14172\\
        7 & 12566567699 & 101053625\\
        8 & ? & 3475496953795289
    \end{tabular}
    \medskip
    \caption{\label{tab:sfpan-count}Number of SFPAN and odd SFPAN with given number of distinct prime factors.}
\end{table}

\subsection{Completeness of the enumeration algorithm}

The critical point of this algorithm is the stopping condition. Are we sure we do not loose any PAN? In order to ensure completeness of the search procedure, we need to prove that, if there is no SFPAN  $n$ such that $\omega(n)=k$ and whose factorization starts with $p_1 \dotsm p_r$, then there is no SFPAN $m$ with $\omega(m)=k$ and whose factorization starts with $p_1\dotsm p_{r-1}\cdot p$ for any $p > p_r$. We actually prove the contrapositive, i.e., that if  $p_1 \dotsm p_{r-1} \cdot p \cdot p_{r+1} \dotsm p_k$ is primitive abundant and $p_{r-1} < p_r < p$, then there exists an SFPAN $m$ with $\omega(m)=k$ and whose factorization starts with $p_1 \dotsm p_{r-1}\cdot p_r$. Note that $p_1 \dotsm p_{r-1}\cdot p_r\cdot p_{r+1} \dotsm p_k$ is abundant, but it might not be primitive abundant (see examples after Proposition~\ref{prop:ab1}).

Since a similar stopping condition will be used also in the algorithm of the next subsection, we will also consider the case of non-necessarily square-free PAN.

\begin{thm}[Deficient sequence completion]
	\label{thm:primitive-expand}
	If $m = \bar p_1^{e_1} \dotsm \bar p_r^{e_r}$ is deficient and $c(m) \geq \bar p_r$, then there are primes $p,  q$ with $\bar p_r < p < q$ such that $mp$ is deficient and $mpq$ is abundant. If $m$ is square-free, $mpq$ is primitive abundant.
\end{thm}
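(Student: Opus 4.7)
The plan is to take $p < q$ to be the two smallest primes strictly greater than $c(m)$ and verify the two abundance conditions directly. Since $c(m) \geq \bar p_r \geq 2$, Bertrand's postulate guarantees such primes exist; by construction $p > c(m) \geq \bar p_r$ and $q > p$, so both are coprime to $m$. Applying Proposition~\ref{prop:deficient} with $e=1$ immediately gives that $mp$ is deficient.

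To handle $q$, first compute $c(mp)$: from $\sigma(mp) = \sigma(m)(p+1)$ and $d(mp) = p\,d(m) - \sigma(m) = (p - c(m))\,d(m)$, one obtains $c(mp) = c(m)(p+1)/(p-c(m))$. Applying Proposition~\ref{prop:deficient} a second time, now to the deficient number $mp$ together with the coprime prime $q$, the abundance of $mpq$ is equivalent to $q < c(mp)$, which after clearing denominators amounts to the inequality
\[
(p - c(m))(q - c(m)) < c(m)\bigl(c(m) + 1\bigr).
\]

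The crux of the proof is establishing this inequality, which is where most of the work lies. A naive use of Bertrand alone ($p < 2c(m)$ and $q < 2p < 4c(m)$) only produces $(p-c(m))(q-c(m)) < 3c(m)^2$, which is too weak. For $c(m) \geq 25$ one may invoke Nagura's theorem, which supplies a prime in every interval $(n, 6n/5]$; this yields $p - c(m) < c(m)/5$ and $q - c(m) < c(m)/5 + p/5 < 11c(m)/25$, so the left-hand side is bounded by $11c(m)^2/125 < c(m)(c(m)+1)$. For $2 \leq c(m) < 25$ the pair $(p,q)$ takes only finitely many values as $c(m)$ varies, one per integer interval containing $c(m)$, and the inequality reduces to a finite case analysis; the tightest instances are $c(m) \in [2,3)$, where $(p,q) = (3,5)$ and the left-hand side is at most $3 < 6 \leq c(m)(c(m)+1)$, and $c(m) \in [3,5)$, where $(p,q) = (5,7)$ and the left-hand side is at most $8 < 12$.

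For the final assertion, if $m$ is square-free then so is $mpq$ (since $p, q > \bar p_r$ are coprime to $m$ and distinct), so Corollary~\ref{cor:primitiveweird2} applied to the deficient square-free number $mp$ and the prime $q > p$ gives that $mpq$ is primitive abundant. The main obstacle is the displayed inequality above: controlling the first two prime gaps past $c(m)$ more tightly than Bertrand's postulate alone permits.
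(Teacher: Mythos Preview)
Your proof is correct and shares the paper's overall strategy --- choose $p$ as the least prime above $c(m)$, bound $c(mp)$ from below via a Nagura-type prime-gap estimate, then handle small $c(m)$ by finite inspection --- but the execution differs in two ways worth noting.

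First, you rewrite the target condition $q < c(mp)$ as the symmetric inequality $(p-c(m))(q-c(m)) < c(m)(c(m)+1)$, which the paper does not do; this lets you treat the problem as a pure statement about two prime gaps past $c(m)$ and makes the finite check uniform. The paper instead bounds $c(mp) > 2(p+1)$ directly from $p < \tfrac{3}{2}c(m)$ (Nagura's weaker form, valid for $x\geq 8$) and then invokes Bertrand once more for $q$.

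Second, and more substantively, your finite check for $c(m) < 25$ is purely numerical in $c(m)$, whereas the paper's case analysis for $c(m) < 8$ exploits the arithmetic structure of $m$: it argues from $\bar p_r \in \{2,3,5,7\}$ and the monotonicity properties of $c$ (Proposition~\ref{prop:center}) that only a handful of specific $m$ can satisfy the hypotheses, and exhibits explicit $(p,q)$ for each. Your route is cleaner and more self-contained; the paper's route has a lower threshold (8 vs.\ 25) and so fewer cases in principle, but each case requires more reasoning. Both are valid; yours arguably generalises better since it never looks inside $m$.
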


\begin{proof}
	First of all, when $m$ is square-free, $mpq$ is primitive abundant by Corollary~\ref{cor:primitiveweird2}.
	
	For the main part of the theorem, we consider initially the case $c(m) \geq 8$.
	Let $p$ be the smallest prime larger than $c(m)$. Then $p > c(m) \geq \bar p_r$, and $m p$ is deficient by Proposition~\ref{prop:deficient}. We need to find a prime $q > p$ such that $mpq$ is abundant. This requires $q < c(mp)$. We have
	\[
	c(mp) = \frac{\sigma(m)(p+1)}{2mp - \sigma(m)(p+1)} = \frac{\sigma(m)(p+1)}{d(m) p - \sigma(m)} =
	\frac{p+1}{\displaystyle\frac{p}{c(m)}  - 1}
	\]
	In 1952, Jitsuro Nagura \cite{NagICL} proved that for any $x \geq 8$ there is always a prime strictly between $x$ and $3x/2$. Therefore, by definition of $p$, using $x=c(m)$ in Nagura's Theorem, we have $p < 3 c(m)/2$ and
    \[
	\label{eq:cmp}
	c(m p) > 2(p + 1) = 2 p + 2
	\]\
	Again by Nagura's Theorem (or even weaker results), there is a prime $q$ in the interval $(p, 2p + 2)$. Thus, $q<2p+2<c(mp)$, and this concludes the case $c(m) \geq 8$.
	
	We now consider the case $c(m) < 8$, which implies $\bar p_r < 8$.
	
    
	If $\bar p_r=7$, then $c(m) \geq 7$. Let us take $p=11$ and $q=13$. Then $mp$ is deficient and $c(mp) = 12 / (11/c(m) -1 ) \geq 12 / (11/7 -1) = 21$, hence $mpq$ is abundant.
	
	If $\bar p_r = 5$, then $r \neq 1$, because $c(5^{e_r}) < 5/(5-2) = 5/3$ for any $e_r$ by \eqref{prop:center:inc} in Proposition~\ref{prop:center}. If both $2$ and $3$ are other factors of $m$, then $m$ is abundant by Proposition~\ref{prop:abundance}, because $2\cdot 3\cdot 5$ is abundant. Therefore, $m$ is either of the form $2^{e_1}5^{e_2}$ or $3^{e_1}5^{e_2}$ for $e_1, e_2 \geq 1$. Since $c(2 \cdot 5)=9 > 8$, we only consider the case $m=3^{e_1}5^{e_2}$, by \eqref{prop:center:mult} in Proposition~\ref{prop:center}. Since $c(3^2 \cdot 5^2) > 8$, the only cases remaining are: $m=3 \cdot 5$,  $m=3^2 \cdot 5$ and $m=3 \cdot 5^2$. However, $c(3 \cdot 5)=4 < 5$ and $c(3 \cdot 5^2)=62/13 < 5$. Therefore, the only $m$ satisfying the hypothesis of the theorem is $3^2 \cdot 5$, for which we may take $p=7$ and $q=11$.
	
	If $\bar p_r = 3$, then $r=1$: if $2$ also appears as a prime factor in $m$, then $m$ cannot be deficient since $2 \cdot 3$ is perfect, see Proposition~\ref{prop:abundance}. Then $m=3^{e_1}$ for some $e_1$. However, $c(3^{e_1}) < 3/(3-2) = 3$, hence $m$ does not satisfy the hypothesis of the theorem.
	
	If $\bar p_r = 2$, then $m=2^{e_r}$ and $\sigma(m)= c(m)= 2^{e_r+1} -1$. If $e_r \geq 3$ then $c(m) \geq 8$ and we fall into the previous case. For the remaining cases: if $m=2$, take $p=5$ and $q=7$; if $m=4$, take $p=11$ and $q=13$.
    \end{proof}

\begin{remark}\label{rem:nompq}
In the hypothesis of the previous theorem, when $m$ is not square-free, it might not be possible to obtain $p, q$ such that $mpq$ is primitive abundant. Consider $m=3^8 \cdot 5$, so that $8 < c(m) < 9$. If we determine $p$ as the smallest prime $p>c(m)$ and $q$ as the largest $q<c(mp)$ as in Proposition~\ref{prop:deficient}, we get $p=11, q=53$ and $m \cdot 11 \cdot 53$ which is abundant but not primitive abundant, since $3^7 \cdot 5 \cdot  11 \cdot 53$ is abundant, too. If we replace $53$ with smaller primes $q$ the abundance increases, because in general $\Delta(mq)-\Delta(mq')=\Delta(m)(q-q')$ whenever $q,q'$ are coprime with $m$, hence $m\cdot 11\cdot q$ is abundant and the number we obtain cannot be primitive abundant by Proposition~\ref{prop:ab1}. By increasing $p$ and computing the corresponding largest possible $q<c(mp)$, we get $m \cdot 13 \cdot 31$ and $m \cdot 17 \cdot 19$, but none of them is primitive abundant. We have $[c(m\cdot 19)]=17$, hence for primes $p\ge 19$ we get $c(m\cdot p)\ge c(m\cdot 19)>17$ by \eqref{prop:center:dec} in Proposition~\ref{prop:center}, and we have no primes $q>p$ making $mpq$ abundant.

Even relaxing the condition $\bar  p_r < p < q$ into $\bar p_r \leq p \leq q$, we do not get any PAN of the form $mpq$. Actually, $m \cdot 11^2$ is deficient, hence no number of the form $mpq$ is abundant when $p = q \ge 11$, by Proposition~\ref{prop:ab1}. If we take $p=5$, we have $13 < c(m \cdot 5) < 14$. Hence, $m \cdot 5 \cdot 13$ is abundant, but not primitive abundant, since $3^7 \cdot 5^2 \cdot 13$ is abundant, too. Finally, $m \cdot 5^2$ is not abundant.
\end{remark}

\begin{cor}
	\label{cor:primitive-expand}
	If $m =\bar p_1^{e_1} \dotsm \bar p_r^{e_r}$ is deficient and there exists a prime $p > \bar p_r$ such that $mp$ is abundant, then for each $s > 0$ there are primes $p_1 < \dots < p_s$ such that $p_1 > \bar p_r$, $m p_1 \dotsm p_s$ is abundant and $mp_1 \dotsm p_i$ is deficient for each $i < s$. Moreover, if $m$ is square-free, then $mp_1 \dotsm p_s$ is primitive abundant.
\end{cor}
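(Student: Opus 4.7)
The plan is to prove this by induction on $s$, using Theorem~\ref{thm:primitive-expand} as the engine that produces one additional prime factor at each step.

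For the base case $s=1$, simply take $p_1 = p$, the prime whose existence is assumed; the condition on $mp_1\dotsm p_i$ being deficient for $i < s$ is vacuous, so there is nothing to check.

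For the inductive step with $s \geq 2$, the key observation is that the hypothesis already forces $c(m) > \bar p_r$. Indeed, since $mp$ is abundant with $p > \bar p_r$ a prime coprime to $m$, Proposition~\ref{prop:deficient} yields $p < c(m)$, so $c(m) > p > \bar p_r$. This puts us in the hypothesis of Theorem~\ref{thm:primitive-expand}, which supplies primes $\bar p_r < p_1 < q$ such that $mp_1$ is deficient and $mp_1 q$ is abundant. I would then apply the inductive hypothesis to the deficient number $m' \eqdef m p_1$ (whose largest prime factor is $p_1$) together with the prime $q > p_1$ satisfying $m'q$ abundant. This yields primes $p_2 < \dots < p_s$ with $p_2 > p_1$ such that $m' p_2 \dotsm p_s = m p_1 \dotsm p_s$ is abundant and $m' p_2 \dotsm p_i = m p_1 p_2 \dotsm p_i$ is deficient for every $2 \leq i \leq s-1$. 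Combined with the fact that $m p_1$ is itself deficient by construction, this gives the full chain of deficiency conditions for $i < s$, and the desired inequalities $\bar p_r < p_1 < p_2 < \dots < p_s$.

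For the square-free addendum, observe that if $m$ is square-free then so is $mp_1\dotsm p_s$, since each $p_j > \bar p_r$ is distinct from the prime factors of $m$ and the $p_j$'s are pairwise distinct. Then $mp_1\dotsm p_{s-1}$ is a square-free deficient number whose largest prime factor $p_{s-1}$ is smaller than $p_s$, and $mp_1\dotsm p_s$ is abundant, so Corollary~\ref{cor:primitiveweird2} immediately promotes it to a primitive abundant number.

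There is no real obstacle here: the whole argument is a routine induction driven by Theorem~\ref{thm:primitive-expand}. The only point requiring a moment's care is verifying that the inductive hypothesis is legitimately applicable to $m' = mp_1$, which amounts to noting that $m'$ is deficient (guaranteed by Theorem~\ref{thm:primitive-expand}) and that $q > p_1$ is a prime with $m'q$ abundant (also guaranteed by Theorem~\ref{thm:primitive-expand}); once this is in place the induction closes cleanly.
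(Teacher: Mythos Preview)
Your proof is correct and follows essentially the same approach as the paper's: the base case $s=1$ takes $p_1=p$, and for $s>1$ the paper says the result ``follows by repeatedly applying Theorem~\ref{thm:primitive-expand}'', checking exactly as you do that $c(m)>p>\bar p_r$ via Proposition~\ref{prop:deficient} so that the hypotheses of Theorem~\ref{thm:primitive-expand} hold and are preserved. Your induction simply makes this ``repeated application'' explicit, and the square-free clause is handled identically via Corollary~\ref{cor:primitiveweird2}.
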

\begin{proof}
	For $s=1$ the result follows by choosing $p_1 = p$. For $s > 1$, it follows by repeatedly applying Theorem~\ref{thm:primitive-expand}. Note that since $mp$ is abundant and $p > \bar p_r$, then $c(m) > p > \bar p_r$ by Proposition~\ref{prop:deficient}, hence the hypothesis of Theorem~\ref{thm:primitive-expand} hold and they are preserved by repeated applications. The result for $m$ square-free follows from Corollary~\ref{cor:primitiveweird2}.
\end{proof}

If $m$ is not square-free, the fact that $p_1,\dots, p_s$ may be chosen in such a way that $p_1,\dots p_s$ is primitive abundant is not always true: take for instance $m=3^8\cdot 5$ as in Remark~\ref{rem:nompq}. Then $m \cdot 7$ is abundant, but we have seen there are no $p$, $q$ such that $5 \leq p \leq q$ and $mpq$ is primitive abundant. 

The following theorem proves that the algorithm enumerating SFPAN is complete.
\begin{thm}
	\label{thm:complete}
	Let $m=p_1 \dotsm p_k$ be an abundant number, with $p_1 < \dots <p_k$. Let $j < k$ and  $p_{j-1} < \tilde p_j < p_j$ such that $p_1 \dotsm p_{j-1} \tilde p_j$ is deficient. Then, there are primes $\tilde p_{j+1} < \dots < \tilde p_k$ such that $\tilde p_j< \tilde p_{j+1}$, $\widetilde m = p_1 \dotsm p_{j-1} \tilde p_j \dotsm \tilde p_k$ is primitive abundant and $p_1 \dotsm p_{j-1} \tilde p_j \dotsm \tilde p_i$ is deficient for every $i < k$.
\end{thm}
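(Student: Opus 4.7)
The plan is to reduce the statement to a single application of Corollary~\ref{cor:primitive-expand}. Set $m' = p_1 \cdots p_{j-1} \tilde p_j$, which is square-free and deficient by hypothesis, and consider $m^* = m' \cdot p_{j+1} \cdots p_k$, obtained from $m$ by replacing the factor $p_j$ with the smaller prime $\tilde p_j$. Proposition~\ref{prop:ab1} immediately gives that $m^*$ is abundant. Since $m'$ is deficient while $m^*$ is not, there is a well-defined smallest index $i^* \in \{j+1, \ldots, k\}$ for which $m' p_{j+1} \cdots p_{i^*}$ is non-deficient. Denote $n = m' p_{j+1} \cdots p_{i^*-1}$ (with the convention $n = m'$ when $i^* = j+1$); by minimality, $n$ is a square-free deficient number and $n p_{i^*}$ is non-deficient.

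The first substantive step is to upgrade ``$n p_{i^*}$ non-deficient'' to ``$n p_{i^*}$ abundant'', because Corollary~\ref{cor:primitive-expand} requires an abundant witness. Since $n p_{i^*}$ is square-free, if it were perfect it would equal $6$, the unique square-free perfect number. This would force $n = 2$ and $p_{i^*} = 3$, hence $j = 1$, $\tilde p_1 = 2$, $i^* = 2$, and $p_2 = 3$; but then the hypothesis $\tilde p_1 < p_1 < p_2$ would require a prime strictly between $2$ and $3$, which is impossible. Hence $n p_{i^*}$ is abundant.

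Next I would apply Corollary~\ref{cor:primitive-expand} to the square-free deficient $n$, using the witness prime $p = p_{i^*}$ and amplitude $s = k - i^* + 1$. This yields primes $\tilde p_{i^*} < \tilde p_{i^*+1} < \cdots < \tilde p_k$, each strictly larger than the largest prime factor of $n$, such that $n \tilde p_{i^*} \cdots \tilde p_k$ is primitive abundant and $n \tilde p_{i^*} \cdots \tilde p_i$ is deficient for every $i < k$. I complete the sequence by setting $\tilde p_\ell \eqdef p_\ell$ for $j < \ell < i^*$. Then $\tilde p_j < \tilde p_{j+1} < \cdots < \tilde p_k$, and the desired $\widetilde m = p_1 \cdots p_{j-1} \tilde p_j \cdots \tilde p_k$ coincides with $n \tilde p_{i^*} \cdots \tilde p_k$, hence is primitive abundant. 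Deficiency of every proper prefix follows from two sources: prefixes with index $i < i^*$ are of the form $m' p_{j+1} \cdots p_i$, deficient by the minimality defining $i^*$, while prefixes with $i^* \leq i < k$ are of the form $n \tilde p_{i^*} \cdots \tilde p_i$, deficient by the Corollary.

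The only delicate point in the argument is the exclusion of $n p_{i^*} = 6$ in the second paragraph; everything else is bookkeeping around the choice of the transition index $i^*$ and the fact that the ``tail'' $p_{j+1}, \ldots, p_{i^*-1}$ of the original factorization can be grafted unchanged onto the new sequence before invoking the Corollary.
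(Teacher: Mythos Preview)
Your proof is correct and follows essentially the same approach as the paper: locate the first index where the modified prefix $p_1\cdots p_{j-1}\tilde p_j p_{j+1}\cdots p_{i}$ ceases to be deficient, then apply Corollary~\ref{cor:primitive-expand} to extend the preceding (deficient, square-free) prefix by $k-i^*+1$ further primes. The paper's proof is terser---it defines $r$ directly as the first index where the prefix is \emph{abundant} and does not explicitly exclude the possibility that the immediately preceding prefix is perfect---whereas you take the first \emph{non-deficient} index and then rule out perfection via the uniqueness of $6$ as a square-free perfect number. That extra paragraph is a genuine, if small, improvement in rigor; otherwise the two arguments are the same.
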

\begin{proof}
	Let $r$ be the first index such that $p_1 \dotsm p_{j-1} \tilde p_{j} p_{j+1} \dotsm p_{r}$ is abundant. Then $r>j$ by hypothesis, and $r\le k$ because $m \tilde p_j / p_j$ is abundant by Proposition~\ref{prop:ab1}. Then we just apply Corollary~\ref{cor:primitive-expand} in order to add $k-r+1$ prime factors to  $p_1 \dotsm p_{j-1} \tilde p_{j} p_{j+1} \dotsm p_{r-1}$.
\end{proof}


\subsection{Possibly non square-free PAN}\label{sec:nonSFPANs}
An extension of the algorithm to find (non necessarily square-free) PAN $n$ with a fixed $\Omega(n)$ may be devised by allowing consecutive primes to be equal.

In other words, we see a number $m$ as the product of primes $\bar p_1 \cdots \bar p_r$ with $\bar p_1 \leq \dots \leq \bar p_r$. When called with $r < k-1$, the recursive procedure tries to extend $m$ to a deficient number $\widetilde m = mp$ using either $p=\bar p_r$ or $p > c(m)$ as for the square-free case. When $r=k-1$, the procedure tries to obtain an abundant number $mp$ by choosing either $p=\bar p_r$ or $p < c(m)$. In both cases, when $p=\bar p_r$, Proposition~\ref{prop:deficientbis} is used to decide whether $mp$ is abundant or deficient.

%

In the square-free case, when $r=k-1$, it is enough to choose $p > \bar p_r$ in order to ensure that $m p$ is not only abundant, but also primitive abundant. In the non square-free case this is not enough: we need to use a different lower bound for $p$, which can be computed using Theorem~\ref{thm:primitive-weird2}.

Another difference with respect to the square-free case is the stopping condition. The reason lies in the extension of Theorem~\ref{thm:complete} to possibly non square-free number.
\begin{thm}
    \label{thm:complete-nonsquarefree}
    Let $m=p_1 \cdots p_k$ be a PAN, with $p_1 \leq \dots \leq p_k$. Let $j < k$ and  $p_{j-1} < \tilde p_j < p_j$ such that $p_1 \cdots p_{j-1} \tilde p_j$ is deficient. Then, there are primes $\tilde p_{j+1} \leq \dots \leq \tilde p_k$ such that $\tilde p_j \leq \tilde p_{j+1}$,  $\widetilde m = p_1 \cdots p_{j-1} \tilde p_j \cdots \tilde p_k$ is abundant and $p_1 \cdots p_{j-1} \tilde p_j \cdots \tilde p_i$ is deficient for every $i < k$.
\end{thm}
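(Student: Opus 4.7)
The plan is to follow the proof of Theorem~\ref{thm:complete}, adapted to allow repeated prime factors. Let $r\in\{j+1,\dots,k\}$ be the smallest index such that $p_1\cdots p_{j-1}\tilde p_j p_{j+1}\cdots p_r$ is abundant, and set $\tilde p_i = p_i$ for $j < i < r$. By the minimality of $r$, the number $n := p_1\cdots p_{j-1}\tilde p_j p_{j+1}\cdots p_{r-1}$ is deficient. One then invokes Corollary~\ref{cor:primitive-expand}, whose main conclusion does not require square-freeness, with $s = k - r + 1$, producing primes $q_1 < \cdots < q_s$, each strictly greater than the largest prime factor of $n$ (which is $p_{r-1}$, since $\tilde p_j < p_j \leq p_{r-1}$), such that $nq_1\cdots q_s$ is abundant and $nq_1\cdots q_i$ is deficient for every $i < s$. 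Setting $\tilde p_{r-1+i} := q_i$ then yields a non-decreasing sequence $\tilde p_j < \tilde p_{j+1} \leq \cdots \leq \tilde p_{r-1} < q_1 < \cdots < q_s$ satisfying the conclusions of the theorem.

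The new step required, compared to Theorem~\ref{thm:complete}, is to show $r \leq k$; equivalently, that the fully replaced product is abundant. In the square-free case this followed at once from Proposition~\ref{prop:ab1}. Here $p_j$ may occur in $m$ with multiplicity $e \geq 2$, and we substitute only one of the $e$ copies, so Proposition~\ref{prop:ab1} does not apply verbatim. Writing $m = Mp_j^e$ with $\gcd(M, p_j) = 1$ and $m' = Mp_j^{e-1}\tilde p_j$, the multiplicativity of $\sigma_{-1}$ combined with the identity $p_j\sigma(p_j^{e-1}) = \sigma(p_j^e) - 1$ gives
$$\frac{\sigma_{-1}(m')}{\sigma_{-1}(m)} = \frac{\sigma_{-1}(p_j^{e-1})(1+1/\tilde p_j)}{\sigma_{-1}(p_j^e)} = \left(1 - \frac{1}{\sigma(p_j^e)}\right)\left(1 + \frac{1}{\tilde p_j}\right),$$
and a direct simplification reduces $\sigma_{-1}(m') > \sigma_{-1}(m)$ to $\sigma(p_j^e) > \tilde p_j + 1$, which holds since $\sigma(p_j^e) \geq 1 + p_j > 1 + \tilde p_j$. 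Hence $m'$ is abundant, so $r$ exists and $r \leq k$.

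The main obstruction I foresee, already tacitly present in the proof of Theorem~\ref{thm:complete}, is the pathological case where some intermediate prefix $p_1\cdots p_{j-1}\tilde p_j p_{j+1}\cdots p_i$ with $j < i < r$ happens to be perfect rather than strictly deficient. In that case the minimality definition of $r$ produces a non-deficient seed $n$, and Corollary~\ref{cor:primitive-expand} cannot be applied as stated. One handles this by perturbing the choice of some $p_{j+1},\dots,p_{r-1}$ in the chain to a slightly different prime, appealing to the existence of sufficiently many nearby primes (Bertrand's postulate or the Nagura bounds used in Theorem~\ref{thm:primitive-expand}) to break the perfect equality and restore strict deficiency while preserving the non-decreasing ordering. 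Since square-free perfect numbers are extremely rare (only $6$ is known) and arbitrary perfect numbers have a very rigid form, in practice this contingency is a technicality, but it is the only non-routine point beyond the single-copy substitution calculation above.
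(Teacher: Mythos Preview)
Your argument follows the paper's proof almost exactly: establish that $m\tilde p_j/p_j$ is abundant, take $r$ minimal with the $r$-th prefix abundant, and then invoke Corollary~\ref{cor:primitive-expand} on the $(r-1)$-th prefix. The only difference is in the first step: where you compute the ratio $\sigma_{-1}(m')/\sigma_{-1}(m)$ explicitly and reduce to $\sigma(p_j^e) > \tilde p_j + 1$, the paper gets the same conclusion in one line via sub-multiplicativity of $\sigma_{-1}$, namely $2 < \sigma_{-1}(m) \le \sigma_{-1}(m/p_j)\,\sigma_{-1}(p_j) < \sigma_{-1}(m/p_j)\,\sigma_{-1}(\tilde p_j) = \sigma_{-1}(m\tilde p_j/p_j)$; your remark about a possible perfect intermediate prefix is a genuine technicality that the paper's proof also leaves unaddressed.
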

\begin{proof}
    Since $\sigmam$ is sub-multiplicative, if we replace in $m$ the prime $p_j$ with $\tilde p_j$, the resulting number $m \tilde p_j / p_j$ is abundant. Actually $2 < \sigmam(m) = \sigmam(m p_j / p_j) \leq \sigmam(m/p_j) \sigmam(p_j) \leq \sigmam(m/p_j) \sigmam(\tilde p_j) = \sigmam(m \tilde p_j /p_j)$. Let $r$ be the first index (which by hypothesis is strictly  larger than $j$) such that $p_1 \dots p_{j-1} \tilde p_{j} p_{j+1} \cdots p_{r}$ is abundant. Then we just apply Corollary~\ref{cor:primitive-expand} in order to add $k-r+1$ prime factors to  $p_1 \dots p_{j-1} \tilde p_{j} p_{j+1} \cdots p_{r-1}$.
\end{proof}
We cannot guarantee that $\widetilde m$ is primitive abundant. For example, although $3^6 \cdot 5 \cdot 13 \cdot 31$ is primitive abundant, there is no $p \geq 11$ such that $m = 3^6 \cdot 5 \cdot 11 \cdot p$ is primitive abundant.

Since Theorem~\ref{thm:complete-nonsquarefree} does not ensure that $\widetilde{m}$ is primitive abundant, the procedure should return a boolean saying whether an abundant number (not necessarily a primitive abundant number) has been found, and stop when the recursive call returns false.

The complete description may be found in Algorithm~\ref{algo:pan_bigomega}.Using an implementation in SageMath of the algorithm we managed to count the number of PAN with from $1$ to $7$ prime factors (counted with their multiplicity) and odd SFPAN with from $1$ to $8$ prime factors. The results are shown in Table~\ref{tab:pan-count} and form the OEIS sequences \href{https://oeis.org/A298157}{A298157} and \href{https://oeis.org/A287728}{A287728}. We have also computed a list of PAN with up to 6 prime factors, which is available on GitHub at \repourl.

\begin{table}[ht]
    \begin{tabular}{ccc}
        $\boldsymbol\Omega$ & \textbf{\# all } & \textbf{\# odd}\\
        \hline
        1 & 0 & 0\\
        2 & 0 & 0\\
        3 & 2 & 0\\
        4 & 25	 & 0\\
        5 & 906 & 121\\
        6 & 265602 & 15772\\
        7 & 13232731828 & 102896101\\
        8 & ? & 3475842606319962
    \end{tabular}
    \medskip
    \caption{\label{tab:pan-count}Number of PAN and odd PAN with given number of prime factors counted with multiplicity.}
\end{table}

\begin{algorithm}
	\SetKwFunction{pndn}{pndn}
	\LinesNumbered
	\SetKw{And}{and}
	\SetKw{Or}{or}
	\SetKwFunction{sfpan}{sfpan}
	\SetKwData{count}{count}
	\SetKwData{found}{found}
	\SetKwData{lowerbound}{lowerbound}
	\SetKwData{innerCount}{innerCount}
	\SetKwData{innerFound}{innerFound}
	\Fn{\pndn{k: nat, m: nat = 1}}{
		\KwIn{$k$ is a natural number; $m = \bar p_1^{e_1} \dotsm \bar p_r^{e_r}$  is a deficient number with $\bar p_1 < \dots < \bar p_r$}
		\KwOut{all primitive non-deficient numbers of the form $m \cdot  p_1 \dotsm  p_k$, with $\bar p_r \leq  p_1 \leq \dots \leq p_k$.}
		\KwResult{a pair (\count, \found) where \count is the number of primitive non-deficient number of the form above, and \found is a boolean which is true when a (possibly non-primitive) non-deficient number of the form above has been found.}
		\Begin{
			\count $\leftarrow$ 0\;
			\found $\leftarrow$ false\;
			\eIf{$j=1$}{
				\If{there is a prime $p$ s.t. $\bar p_r < p \leq c(m)$}{
					\found $\leftarrow$ true\;
					\lowerbound $\leftarrow \max \{ c(m/p) \mid p \text{ is a divisor of } m\}$\;
					\ForEach{$p$ prime s.t. $\max(\bar p_r,\lowerbound) < p \leq c(m)$}{
						\Print{mp}\;
						\count $\leftarrow$ \count + 1
					}
				}
				\If {$\bar p_r \cdot \sigma(\bar p_r^{e_r}) \leq c(m)$}{
					\found $\leftarrow$ true\;
					\lowerbound $\leftarrow \max \{ c(m/p) \mid p < \bar p_r \text{ is a divisor of } m\}$\;
					\If {$\bar p_r \cdot \sigma(\bar p_r^{e^r}) > \lowerbound$}{
						\Print($m\bar p_r$)\;
						\count $\leftarrow$ \count +1
					}
				}
				\Return (\count,\found)
			}{
				\If{$m\bar p_r$ is deficient}{
					(\innerCount, \innerFound) $\leftarrow$ \pndn{$k-1$, $m\bar p_r$}\;
					\count $\leftarrow$ \count + \innerCount\;
					\found $\leftarrow$ \found \Or \innerFound
				}
				\ForEach{$p$ prime s.t. $p > \max(\bar p_r, c(m))$}{
					(\innerCount, \innerFound) $\leftarrow$ \pndn{$k-1$, $mp$}\;
					\count $\leftarrow$ \count + \innerCount\;
					\found $\leftarrow$ \found \Or \innerFound\;
					\If {\innerFound $=$ false} {
						\Return (\count, \found)
					}
				}
			}
		}
	}
	\caption{\label{algo:pan_bigomega}Enumerating primitive non-deficient numbers with $k$ prime factors, counted with their multiplicity.}
\end{algorithm}

\section{Weird numbers}\label{sec:weird}

In a previous paper \cite{AHMPWN}, we developed search algorithms which allowed us to find primitive weird numbers (PWN) with up to 6 different prime factors. However, we were not able to proceed further, because of the computational complexity involved. It was clear that a different approach was needed, which was suggested to us by the following known result.

\begin{prop}
    A number is primitive weird iff it is weird and primitive abundant.
\end{prop}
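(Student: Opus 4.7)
The plan is to prove both implications directly from the definitions, reducing everything to one auxiliary lifting observation. The backward direction is essentially immediate: if $n$ is weird and a PAN, then every proper divisor of $n$ is deficient, so no smaller weird number (which would itself be abundant, hence non-deficient) can divide $n$, and therefore $n$ is a PWN.

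For the forward direction I would assume $n$ is a PWN. Since PWN are weird by definition, only primitive abundance needs to be checked. Suppose for contradiction that $n$ has a proper non-deficient divisor $m$, so $m$ is either perfect or abundant. The key step is the following \emph{lifting lemma}: if $m$ is a proper divisor of $n$ that admits a representation $m = d_1 + \cdots + d_r$ as a sum of distinct divisors of $m$ with each $d_i < m$, then setting $k = n/m > 1$ yields $n = k d_1 + \cdots + k d_r$, where the $k d_i$ are distinct divisors of $n$ each strictly less than $n$, so $n$ is semiperfect.

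With this in hand, the proof concludes by cases on $m$. If $m$ is perfect, the canonical representation $m = \sum_{d \mid m,\, d < m} d$ feeds into the lifting lemma and forces $n$ to be semiperfect, contradicting weirdness of $n$. If $m$ is abundant, then $m$ is either semiperfect --- again forcing $n$ to be semiperfect via the lemma and contradicting weirdness --- or $m$ is itself weird, which contradicts the primitivity of $n$ as a weird number. Every branch produces a contradiction, so $n$ is a PAN. The only non-routine step is the lifting lemma, and even there the content is just that the scaled summands $k d_i$ are pairwise distinct (since the $d_i$ are and $k \geq 1$) and proper divisors of $n$ (since $d_i < m$ implies $k d_i < k m = n$). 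I do not foresee any genuine obstacle.
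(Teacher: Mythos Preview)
Your proof is correct and follows essentially the same route as the paper. The paper's argument is slightly more compressed: rather than splitting the non-deficient divisor $m$ into the perfect and abundant cases, it observes in one stroke that a non-deficient $m$ which is not weird must be semiperfect (this covers both perfect and abundant-but-not-weird), and then applies exactly your lifting lemma to conclude that $n$ is semiperfect. Your explicit case split and your careful verification that the scaled summands $kd_i$ are distinct proper divisors of $n$ make the argument a bit more transparent, but the underlying idea is identical.
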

\begin{proof}
    If $n$ is primitive weird, by definition it is weird and abundant. We prove that, for any $m \div n$, $m$ is deficient.
    Assume $n=mk$. For the sake of contradiction, assume $m$ is non-deficient. Since $m$ cannot be weird by hypothesis, there is a subset $S$ of divisors of $m$ such that $m = \sum_{d \in S} d$. If $d$ is a divisor of $m$, $dk$ is a divisor of $n$. Hence $n = mk = \sum_{d \in S} dk$ is not weird, contradicting our hypothesis.

    On the other side, let $n$ be weird and primitive abundant. If $m \div n$ then $m$ is deficient, hence it cannot be weird. Therefore, $n$ is primitive weird.
\end{proof}

Given that PWN are only a particular case of PAN, we use the algorithms for enumerating PAN shown in the previous section, and add a straightforward check for weirdness, transforming them into algorithms for enumerating PWN.

Checking for weirdness can actually be made more efficient using the following well-known fact.
\begin{prop}
    \label{prop:weird-abundance}
    An abundant number $n$ is weird iff $\Delta(n)$ cannot be expressed as a sum of distinct proper divisors of $n$.
\end{prop}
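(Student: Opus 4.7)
The plan is to prove the contrapositive equivalent statement: $n$ is semiperfect if and only if $\Delta(n)$ can be written as a sum of distinct proper divisors of $n$. The whole argument hinges on a single bijection by complementation inside the set of proper divisors.

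First I would recall the key numerical identity. Let $D(n)$ denote the set of proper divisors of $n$ (that is, divisors strictly less than $n$). Then by definition
\[
\sum_{d \in D(n)} d \;=\; \sigma(n) - n \;=\; n + \Delta(n),
\]
using $\Delta(n) = \sigma(n) - 2n$.

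Next I would exhibit the bijection. Given any subset $S \subseteq D(n)$, let $S^c = D(n) \setminus S$. Then
\[
\sum_{d \in S} d + \sum_{d \in S^c} d = n + \Delta(n).
\]
Hence $\sum_{d \in S} d = n$ if and only if $\sum_{d \in S^c} d = \Delta(n)$. The map $S \mapsto S^c$ is an involution on subsets of $D(n)$, so it gives a bijection between subsets of proper divisors summing to $n$ and subsets of proper divisors summing to $\Delta(n)$.

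Consequently, $n$ is semiperfect (i.e., $n$ is a sum of distinct proper divisors) if and only if $\Delta(n)$ is a sum of distinct proper divisors. Since $n$ is weird iff it is abundant and not semiperfect, the claim follows directly under the standing hypothesis that $n$ is abundant. There is no genuine obstacle here; the only thing to be careful about is to express sums in the set $D(n)$ of \emph{proper} divisors (excluding $n$ itself), so that the total is $\sigma(n)-n$ and the complementation trick works cleanly.
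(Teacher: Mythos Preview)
Your proof is correct: the complementation bijection $S\mapsto D(n)\setminus S$ inside the set of proper divisors, combined with the identity $\sum_{d\in D(n)}d=n+\Delta(n)$, is exactly the right mechanism, and your care about working only with proper divisors is well placed. The paper itself does not prove this proposition at all---it simply refers the reader to \cite[Lemma~2]{MelCIP}---so your self-contained argument is in fact more than what the paper provides; the complementation trick you use is the standard proof and almost certainly what appears in the cited reference.
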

\begin{proof}
    For a proof one can see, for instance, \cite[Lemma 2]{MelCIP}.
\end{proof}

\subsection{The square-free case}

We consider again Algorithm~\ref{algo:sfpan} for the square-free case. Since we are interested in finding PWN with several prime factors, and since it is not computationally feasible to enumerate all PAN in such cases, we provide as an additional input to the algorithm an \emph{amplitude} value $a$. At each step of the procedure, when iterating over primes larger than $c(m)$ (or smaller then $c(m)$ in the case $r=k-1$), we only consider at most the first $a$ primes.


Another generalization consists in starting the search procedure from a possibly non square-free deficient number $m$. This means that, in the Algorithm~\ref{algo:sfpan}, each $\bar p_i$ may be a power of a prime number, although new primes added by the procedure are always square-free. However, when $r=k-1$, we only consider primes $p$ which are larger than $\sigma(q^\alpha)$ for each $q^\alpha \entdiv m$. In such a way, by Corollary~\ref{cor:primitiveweird}, the abundant numbers found by the search procedure turns out to be primitive abundant. When $m$ is a power of $2$, $c(m)=\sigma(m)$ and there are no additional constraints on the choice of the last prime.


\begin{remark}
	In determining whether a number is weird, the sufficient conditions in Theorem~3.1 of our previous paper \cite{AHMPWN} could be employed. However, experimental evaluation has shown that most of the weird numbers generated with our approach fail to satisfy these conditions. Therefore, a direct proof of weirdness using Proposition~\ref{prop:weird-abundance} is employed.
\end{remark}

%
%
%

The weird numbers generated by this procedure tend to be huge. At each step, since we choose $p$ close to $c(m)$, we minimize the deficiency of $\widetilde m = mp$. However, when recursively calling the search procedure on $\widetilde m$, since $d(\widetilde{m})$ is small, $c(\widetilde m)$ is quite large. This is repeated step after step, leading to very large prime factors.
%
For example, all the PWN we have generated with $\omega=12$ are larger than $10^{900}$. Since dealing with these huge numbers is cumbersome, we represent them in a form we have called \emph{index sequence}, that turned out to be very useful.

\begin{defin}[Index sequence]
\label{def:index}
Given a number $m = p_1^{e_1} \cdots p_k^{e_k}$, we define $\iota(m)$, the \emph{index sequence} associated to $m$,  as the sequence $[(\iota_1,e_1), \dots, (\iota_k,e_k)]$ with $\iota_1, \dots, \iota_k \in \mathbb{Z}$ such that:
\begin{itemize}
	\item if $\iota_i = 0$, then $p_i = c(w_{i-1})$;
	\item if $\iota_i > 0$, then $p_i$ is the $\iota_i$-th prime larger than $c(w_{i-1})$;
	\item if $\iota_i < 0$, then $p_i$ is the $|\iota_i|$-th prime smaller than $c(w_{i-1})$,
\end{itemize}
where $w_i \eqdef \Pi_{j=1}^i p_j^{e_j}$. To ease notation, we write each pair $(\iota, e)$ as $\iota^e$ or just $\iota$ if $e=1$.
\end{defin}

For example, the number $m = 2^2 \cdot 13 \cdot 17 \cdot 443 \cdot 97919 \cdot 563915507$ is represented by the index sequence $[1^2, 2, 1, 1, 1, -2]$, because 2 is the 1st prime larger than $c(1)=1$, 13 is the 2nd prime larger than $c(2^2)=7$, 17 is the 1st prime larger than $c(2^2\cdot 13)=16.\bar3$, and so on. All index sequences generated by our search procedure have positive indices for all but the last position. All the indices have an absolute value smaller than the amplitude parameter $a$.

\begin{table}[t]
\centering
\small
\begin{tabular}{|l|l|l|l|}
$\boldsymbol{\omega}$ & \textbf{factored weird number} & $\boldsymbol{\Delta}\mathbf{(w)}$ & \textbf{index sequence}\\
\hline
3 & $2 \cdot 5 \cdot 7$ & 4 &  $[1, 1, -1]$ \\
3 & $4 \cdot 11 \cdot 19$ & 8 & $[1^2, 1, -1]$\\
3 & $8 \cdot 17 \cdot 127$ & 16 & $[1^3, 1, -2]$\\
3 & $8 \cdot 19 \cdot 71$ & 16 & $[1^3, 2, -2]$ \\
3 & $8 \cdot 19 \cdot 61$ & 56 & $[1^3, 2, -4]$\\
3 & $8 \cdot 23 \cdot 43$ & 16 & $[1^3, 3, -1]$\\
3 & $8 \cdot 29 \cdot 31$ & 16 & $[1^3, 4, -1]$\\
4 & $2 \cdot 5 \cdot 11 \cdot 53$ & 4 & $[1, 1, 1, -1]$ \\
4 & $2 \cdot 5 \cdot 13 \cdot 31$ & 4 & $[1, 1, 2, -1]$ \\
4 & $4 \cdot 11 \cdot 23 \cdot 251$ & 8 & $[1^2, 1, 1, -1]$\\
4 & $4 \cdot 11 \cdot 23 \cdot 241$ & 88 & $[1^2, 1, 1, -2]$\\
4 & $4 \cdot 11 \cdot 31 \cdot 67$ & 8 & $[1^2, 1, 3, -1]$\\
4 & $4 \cdot 13 \cdot 17 \cdot 439$ & 8 & $[1^2, 2, 1, -1]$\\
4 & $8 \cdot 17 \cdot 137 \cdot 9311$ & 16 & $[1^3, 1, 1, -1]$ \\
4 & $8 \cdot 17 \cdot 139 \cdot 4723$ & 16 & $[1^3, 1, 2, -1]$\\
4 & $8 \cdot 19 \cdot 79 \cdot 1499$ & 16 & $[1^3, 2, 1, -1]$\\
4 & $8 \cdot 19 \cdot 83 \cdot 787$ & 16 & $[1^3, 2, 2, -1]$\\
4 & $8 \cdot 23 \cdot 67 \cdot 139$ & 16 & $[1^3, 3, 5, -1]$\\
5 & $2 \cdot 5 \cdot 11 \cdot 59 \cdot 647$ & 20 & $[1, 1, 1, 1, -1]$ \\
5 & $4 \cdot 11 \cdot 23 \cdot 257 \cdot 13003$ & 8 & $[1^2, 1, 1, 1, -1]$ \\
5 & $4 \cdot 11 \cdot 23 \cdot 257 \cdot 13001$ & 88 & $[1^2, 1, 1, 1, -2]$ \\
5 & $4 \cdot 11 \cdot 23 \cdot 263 \cdot 6047$ & 88 & $[1^2, 1, 1, 2, -1]$ \\
5 & $4 \cdot 13 \cdot 17 \cdot 449 \cdot 24799$ & 232 & $[1^2, 2, 1, 2, -1]$ \\
5 & $4 \cdot 13 \cdot 23 \cdot 61 \cdot 1657$ & 8 & $[1^2, 2, 3, 2, -1]$\\
5 & $8 \cdot 17 \cdot 137 \cdot 9337 \cdot 3953791$ & 272 & $[1^3, 1, 1, 3, -1]$\\
5 & $8 \cdot 17 \cdot 137 \cdot 9341 \cdot 3346951$ &  16 & $[1^3, 1, 1, 4, -1]$  \\
5 & $8 \cdot 17 \cdot 137 \cdot 9341 \cdot 3346883$ & 7088 & $[1^3, 1, 1, 4, -6]$\\
5 & $8 \cdot 23 \cdot 47 \cdot 1091 \cdot 107209$ & 976 & $[1^3, 3, 1, 2, -1]$\\
5 & $8 \cdot 23 \cdot 47 \cdot 1103 \cdot 51839$ &  368 & $[1^3, 3, 1, 5, -1]$ \\
5 & $8 \cdot 23 \cdot 71 \cdot 127 \cdot 6689$ & 16 &  $[1^3, 3, 6, 1, -1]$ \\
5 & $8 \cdot 31 \cdot 37 \cdot 163 \cdot 186959$ & 16 & $[1^3, 5, 2, 1, -1]$\\
5 & $8 \cdot 37 \cdot 43 \cdot 67 \cdot 15227$ & 16 & $[1^3, 6, 5, 1, -1]$ \\
6 & $4 \cdot 11 \cdot 23 \cdot 269 \cdot 4003 \cdot 24766559$ & 88  & $[1^2, 1, 1, 3, 1, -1]$\\
6 & $4 \cdot 11 \cdot 23 \cdot 269 \cdot 4013 \cdot 1508909$ & 248 & $[1^2, 1, 1, 3, 3, -1]$\\
6 & $4 \cdot 13 \cdot 17 \cdot 443 \cdot 97919 \cdot 563915507$ & 1768 & $[1^2, 2, 1, 1, 1, -2]$\\
6 & $4 \cdot 13 \cdot 17 \cdot 443 \cdot 97931 \cdot 330611657$ & 4888 & $[1^2, 2, 1, 1, 3, -1]$ \\
6 & $8 \cdot 17 \cdot 137 \cdot 9349 \cdot 2561627 \cdot 3280965162749$ & 272 & $[1^3, 1, 1, 6, 1, -1]$\\
6 & $8 \cdot 17 \cdot 137 \cdot 9349 \cdot 2561651 \cdot 252384300173$ & 272 & $[1^3, 1, 1, 6, 3, -1]$ \\
6 & $8 \cdot 17 \cdot 139 \cdot 4783 \cdot 389749 \cdot 8454956717$ &  7088 &$[1^3, 1, 2, 5, 2, -1]$\\
6 & $8 \cdot 23 \cdot 47 \cdot 1087 \cdot 167863 \cdot 197246914559$ & 16 &$[1^3, 3, 1, 1, 1, -1]$  \\
7 & $2 \cdot 5 \cdot 11 \cdot 89 \cdot 167 \cdot 829 \cdot 7972687$ & 20 & $[1, 1, 1, 8, 6, 1, -1]$  \\
7 & $4 \cdot 13 \cdot 17 \cdot 443 \cdot 97919 \cdot 563915549 \cdot 10965542434977103$ & 1768 & $[1^2, 2, 1, 1, 1, 2, -1]$
\end{tabular}
\medskip
\caption{\label{table:smallweirds}Some PWN found by our search algorithm. The first column is the number of prime factors. For each $\omega$, entries are in lexicographic order with respect to the index sequence.}
\end{table}

\begin{table}[t]
\small
\centering
\begin{tabular}{|l|l||l|l|}
$\boldsymbol{\omega}$ & \textbf{index sequence} & $\boldsymbol{\omega}$ & \textbf{index sequence}\\
\hline
7 & $[1^3, 1, 2, 2, 1, 4, -1]$ & 11 & $[1^2, 2, 1, 1, 1, 1, 1, 1, 2, 1, -1]$ \\
7 & $[1^3, 6, 6, 1, 1, 6, -6]$ & 11 & $[1^2, 2, 1, 1, 1, 1, 1, 3, 1, 3, -1]$\\
7 & $[1^3, 6, 6, 1, 3, 2, -5]$ & 11 &  $[1^2, 2, 1, 1, 1, 2, 1, 2, 2, 2, -3]$\\
7 & $[1^3, 6, 6, 1, 3, 2, -6]$ & 11 &  $[1^2, 2, 1, 1, 1, 2, 2, 1, 1, 3, -2]$\\
7 & $[1^3, 6, 6, 1, 3, 5, -3]$ & 11 &  $[1^2, 2, 1, 1, 1, 2, 3, 2, 2, 1, -2]$\\
7 & $[1^3, 6, 6, 1, 4, 5, -1]$ & 11 & $[1^2, 2, 1, 1, 1, 3, 3, 3, 1, 3, -2]$ \\
7 & $[1^3, 6, 6, 1, 5, 2, -5]$ & 11 & $[1^2, 2, 1, 1, 2, 1, 1, 2, 2, 1, -1]$ \\
7 & $[1^3, 6, 6, 1, 5, 4, -2]$ & 11 & $[1^2, 2, 1, 1, 2, 2, 1, 3, 1, 1, -1]$\\
7 & $[1^3, 6, 6, 1, 6, 1, -6]$ & 11 & $[1^2, 2, 1, 1, 2, 3, 1, 1, 1, 1, -1]$\\
7 & $[1^3, 6, 6, 1, 6, 3, -2]$ & 11 & $[1^2, 2, 1, 1, 2, 3, 1, 1, 2, 3, -2]$\\
7 & $[1^3, 6, 6, 1, 6, 4, -3]$ & 11 &  $[1^2, 2, 1, 1, 3, 2, 2, 1, 2, 1, -3]$ \\
7 & $[1^3, 6, 6, 1, 6, 4, -4]$ & 11 & $[1^2, 2, 1, 2, 1, 2, 1, 1, 1, 3, -1]$\\
8 & $[1^2, 2, 1, 1, 1, 2, 1, -2]$ & 11 & $[1^2, 2, 1, 2, 1, 2, 1, 3, 2, 1, -1]$\\
8 & $[1^2, 2, 1, 1, 3, 3, 1, -2]$ & 11 & $[1^2, 2, 1, 2, 1, 3, 1, 1, 1, 1, -1]$ \\
8 & $[1^2, 2, 1, 2, 1, 3, 3, -3]$ & 12 & $[1^2, 2, 1, 1, 1, 2, 1,  1, 2, 2, 3, -1]$ \\
9 & $[1^2, 2, 1, 1, 1, 2, 1, 3, -3]$ & 12 & $[1^2, 2, 1, 1, 1, 2, 1, 2, 2, 1, 3, -3]$\\
9 & $[1^2, 2, 1, 1, 2, 3, 1, 2, -3]$ & 12 & $[1^2, 2, 1, 1, 1, 2, 2, 1, 1, 3, 1, -1]$\\
9 & $[1^2, 2, 1, 2, 1, 2, 3, 1, -3]$ & 12 & $[1^2, 2, 1, 1, 1, 2, 2, 1, 2, 3, 1, -1]$ \\
9 & $[1^2, 2, 1, 2, 1, 3, 3, 1, -1]$ & 12 & $[1^2, 2, 1, 1, 3, 1, 1, 3, 1, 1, 1, -3]$  \\
10 & $[1^2, 2, 1, 1, 1, 1, 2, 3, 2, -3]$ & 12 & $[1^2, 2, 1, 1, 3, 1, 1, 3, 1, 1, 3, -3]$\\
10 & $[1^2, 2, 1, 1, 1, 1, 3, 1, 3, -2]$ & 12 & $[1^2, 2, 1, 1, 3, 1, 2, 2, 1, 1, 2, -1]$\\
10 & $[1^2, 2, 1, 1, 1, 1, 3, 3, 1, -3]$ & 13 & $[1^2, 2, 1, 1, 1, 3, 3, 2, 2, 3, 3, 2, -2]$\\
10 & $[1^2, 2, 1, 1, 1, 2, 2, 1, 1, -1]$ & 13 & $[1^2, 2, 1, 2, 1, 1, 1, 1, 1, 2, 3, 2, -1]$ \\
10 & $[1^2, 2, 1, 1, 1, 2, 3, 1, 1, -1]$ & 13 & $[1^2, 2, 1, 2, 1, 1, 1, 1, 1, 3, 1, 1, -2]$ \\
10 & $[1^2, 2, 1, 1, 1, 3, 2, 1, 3, -3]$ & 13 & $[1^2, 2, 1, 2, 1, 1, 1, 1, 2, 1, 1, 2, -1]$\\
10 & $[1^2, 2, 1, 1, 2, 1, 1, 1, 2, -2]$ & 13 & $[1^2, 2, 1, 2, 1, 1, 1, 1, 2, 1, 3, 1, -2]$ \\
10 & $[1^2, 2, 1, 1, 2, 1, 1, 1, 3, -1]$ & 13 & $[1^2, 2, 1, 2, 1, 1, 1, 1, 2, 1, 3, 1, -2]$\\
10 & $[1^2, 2, 1, 1, 2, 1, 2, 3, 1, -3]$ & 13 & $[1^2, 2, 1, 2, 1, 1, 1, 2, 3, 1, 2, 1, -3]$ \\
10 & $[1^2, 2, 1, 1, 2, 3, 3, 1, 3, -2]$ & 13 & $[1^2, 2, 1, 2, 1, 1, 1, 3, 1, 3, 2, 3, -2]$\\
10 & $[1^2, 2, 1, 1, 2, 3, 3, 3, 1, -2]$ & 13 & $[1^2, 2, 1, 2, 1, 1, 1, 3, 1, 3, 3, 2, -3]$ \\
10 & $[1^2, 2, 1, 1, 3, 2, 2, 1, 3, -1]$ & 13 & $[1^2, 2, 1, 2, 1, 1, 1, 3, 2, 2, 1, 1, -1]$\\
10 & $[1^2, 2, 1, 1, 3, 2, 2, 2, 1, -2]$ & 13 & $[1^2, 2, 1, 2, 1, 1, 1, 3, 3, 3, 3, 1, -1]$ \\
10 & $[1^2, 2, 1, 1, 3, 2, 3, 1, 2, -2]$ & 13 & $[1^2, 2, 1, 2, 1, 1, 2, 3, 1, 2, 1, 1, -1]$  \\
10 & $[1^2, 2, 1, 1, 3, 3, 3, 2, 3, -2]$ & 13 & $[1^2, 2, 1, 2, 1, 1, 3, 1, 1, 1, 3, 1, -1]$\\
10 & $[1^2, 2, 1, 2, 1, 2, 2, 3, 2, -2]$ & 13 & $[1^2, 2, 1, 2, 1, 1, 3, 1, 3, 3, 1, 2, -1]$ \\
10 & $[1^2, 2, 1, 2, 1, 2, 2, 3, 2, -3]$ & 13 & $[1^2, 2, 1, 2, 1, 1, 3, 3, 1, 1, 1, 3, -3]$\\
10 & $[1^2, 2, 1, 2, 1, 3, 1, 3, 3, -3]$ & 13 & $[1^2  2, 1, 2, 1, 1, 3, 3, 2, 2, 1, 3, -1]$\\
10 & $[1^2, 2, 1, 2, 1, 3, 2, 3, 2, -1]$ & 13 & $[1^2, 2, 1, 2, 1, 1, 3, 3, 2, 2, 1, 3, -2]$\\
10 & $[1^2, 2, 1, 2, 1, 3, 2, 3, 2, -2]$ & 14 & $[1^2, 2, 1, 2, 1  1, 1, 3, 3, 2, 3, 1, 2, -2]$\\
10 & $[1^2, 2, 1, 2, 1, 3, 3, 2, 2, -3]$ & 15 & $[1^2, 2, 1, 2, 1, 1, 1, 1, 1, 1, 1, 1, 1, 1, -2]$  \\
10 & $[1^2, 2, 1, 2, 2, 3, 1, 1, 3, -1]$ & 16 & $[1^2, 2, 1, 2, 1, 1, 1, 1, 1, 2, 1, 3, 3, 1, 2, -1]$
\end{tabular}
\medskip
\caption{\label{table:bigweirds}Other PWN found with our search algorithm. Since these numbers are large, only the index sequence is shown. As an example, the first two entries are 54 and 37 digits long, while the last three entries are 3608, 7392 and 14712 digits long respectively. For each $\omega$, entries are in lexicographic order.}
\end{table}

\begin{remark}\label{rem:limitation}
	Having to deal with huge numbers is a limitation of our approach: increasing the value of $k$ has a big impact on performance because not only is the search space increased by a factor $a$ (the amplitude of the search space) but the numbers we deal with also become much larger. Experimentally we see that, when $p_{i}$ is near $c(p_1^{e_1} \cdots p_{i-1}^{e_{i-1}})$, then each prime is roughly double the size of the preceding one, in terms of the number of digits. Therefore, there is an exponential increase in the size of factors, which impacts all operations on these numbers, but particularly the procedure for determining the (pseudo-)prime immediately preceding or following a given number $n$. This procedure essentially works by repeatedly calling a (pseudo-)primality test with consecutive odd numbers until a new (pseudo-)prime is found. Since in the average the gap between primes is $\log n$ and the Baillie–PSW primality test \cite{BW,PSW} used by SageMath takes time proportional to $\log^3 n$, the computational complexity of determining the next prime is roughly $\log^4 n$, i.e., $4^k$. This makes it extremely hard to run our algorithms with values of $\Omega > 16$, even with a small value for the amplitude.


	On the other side, it seems that the abundant numbers $m$ generated in this way are very likely going to be weird. This is, at least in part, due to the fact that $\Delta(m)$ is low if compared to $m$ and its prime factors. A low abundance is unlikely to be expressible as sum of divisors of $m$, see Proposition~\ref{prop:weird-abundance}.
\end{remark}

In line with the previous remark, many PWN are easily found starting from a power of two for $m$ and a small amplitude for $a$. Tables~\ref{table:smallweirds} and~\ref{table:bigweirds} contain some of the PWN we have found starting from the following parameters:
\begin{itemize}
\item $m=2$, $a=8$, $k \in \{3, \dots, 10\}$;
\item $m=4$, $a=3$, $k \in \{3, \dots, 16\}$;
\item $m=8$, $a=6$, $k \in \{3, \dots, 10\}$.
\end{itemize}

Table~\ref{table:smallweirds} contains, for each PWN, both its factorization and its index sequence. Table~\ref{table:bigweirds} only contains index sequences since the constituent primes would not fit on the page.
In particular, we mention the following results:
\begin{itemize}
    \item We have found PWN with up to 16 distinct prime factors. Previously, PWN with 6 distinct prime factors were shown in \cite{AHMPWN}, and only one with 7 distinct prime factors was known~\cite{BREPCL}, while no PWN was known with $8$ or more distinct prime factors.
    \item The PWN with 16 distinct prime factors has 14712 digits. This is, to the best of our knowledge, the largest PWN known, the previously largest  having 5328 digits \cite{MelCIP}.
\end{itemize}

Note that, for the sake of efficiency, the search algorithm uses pseudo-primes. However, all the factors for the weird numbers in Tables~\ref{table:smallweirds}, \ref{table:bigweirds}, \ref{table:primesquare} and~\ref{table:primesquaremore} have been validated to be real primes, even using additional software such as \href{http://www.ellipsa.eu/index.html}{Primo} (a primality proving program based on the Elliptic Curve Primality Proving algorithm).

\begin{remark}\label{rem:mfour}
Explaining the fact we find so many PWN only on the basis of their abundance is not satisfactory. In particular, by looking at the tables, it is evident that the initial value $m=4$ is the best choice for determining PWN, at least for low values of the amplitude parameter: with a value of just $3$, we could find PWN with $k$ distinct prime factors for all $k$ between $3$ and $16$. The results for $m=2$ and $m=8$ were less satisfactory, even using much larger values for the parameter $a$. We will investigate this behavior in a forthcoming paper.
\end{remark}

\subsection{PWN with square factors}
\label{sect:square-factors}

Another weirdness in the realm of weirds is the rarity of PWN with odd prime factors of multiplicity greater than one. To the best of our knowledge, up to now there were only five known PWN with a square odd prime factor, listed in the OEIS sequence \href{https://oeis.org/A273815}{A273815}, and no PWN with an odd prime factor of multiplicity strictly greater than two is known.

Using an extension of Algorithm~\ref{algo:pan_bigomega} we have found hundreds of new PWN with at least one odd prime factor of multiplicity greater than one. A selection of them may be found in Table~\ref{table:primesquare}. We find that there are no such PWN for $\Omega < 7$, and the list for $\Omega=7$ is complete. From $\Omega = 8$ onwards, our list is only partial. None of the PWN we have found has odd prime factors with exponent greater than two. 

\begin{table}[t]
    \centering
    \small
    \begin{tabular}{|l|Hl|l|HH}
        $\boldsymbol{\Omega}$ & $\omega$  & \textbf{factored weird number} & \textbf{index sequence} & $\Delta(w)$ & new? \\
        \hline
        7 & 5 & $2^2 \cdot 13^2 \cdot 19 \cdot 383 \cdot 23203$ \textdagger& $[1^2, 2^2, 1, 2, -1]$ &8 & no\\
        7 & 5 & $2^2 \cdot 13 \cdot 17 \cdot 443^2 \cdot  194867$ & $[1^2, 2, 1, 1^2, -6]$ & 103192 & yes\\
        7 & 6 & $2 \cdot 5^2  \cdot 29 \cdot 37 \cdot 137 \cdot 211$ \textdagger& $[1, 1^2, 4, 3, 11, -1]$ &  20 & no\\
        7 & 6 & $2 \cdot 5 \cdot 11^2 \cdot 103 \cdot 877 \cdot 2376097$ & $[1, 1, 1^2, 3, 1, -1]$ & 4  & yes\\
        7 & 6 & $2 \cdot 5 \cdot 11 \cdot 127^2 \cdot 167 \cdot 223$ \textdagger& $[1, 1, 1, 15^2, 15, -1]$ &  4 & no\\
        8 & 5 & $2^3 \cdot 17^2 \cdot 277 \cdot 1979 \cdot 115259$ &  $[1^3, 1^2, 6, 4, -1]$ & 272 & yes\\
        8 & 5 & $2^3 \cdot 23^2 \cdot 53 \cdot 691 \cdot 32587$ & $[1^3, 3^2, 1, 3, -1]$ & 16 & yes\\
        8 & 6 & $2^2 \cdot 13 \cdot 17 \cdot 449 \cdot 24809^2 \cdot 351659387$ & $[1^2, 2, 1, 2, 1^2, -3]$ &123773096  & yes\\
        8 & 6 & $2^2 \cdot 13 \cdot 17 \cdot 449 \cdot 24809^2 \cdot 351659377$ & $[1^2, 2, 1, 2, 1^2, -4]$ & 137667016 & yes \\
        9 & 3 & $2^6 \cdot 137^2 \cdot 1931$ \textdagger&  $[1^6, 2^2, -1]$ &956 & no\\
        9 & 5 & $2^4 \cdot 37 \cdot 197 \cdot 58313^2 \cdot 3400230989$ &   $[1^4, 1, 1, 1^2, -4]$& 10729912 & yes\\
        9 & 5 & $2^4 \cdot 41 \cdot 131 \cdot 21517^2 \cdot 14007547$ & $[1^4, 2, 1, 6^2, -1]$ & 1726528& yes\\
        9 & 7 & $2^2 \cdot 13 \cdot  17 \cdot  443 \cdot  97919 \cdot  563915543^2 \cdot \ap{17}$ &  $[1^2, 2, 1, 1, 1, 1^2, -5]$ & 389552811218584 & yes\\
        9 & 7 & $2^2 \cdot 13 \cdot 17 \cdot 449 \cdot 24809^2 \cdot 351659531 \cdot \ap{16}$ &  $[1^2, 2, 1, 2, 1^2, 3, -1]$  & 136920152 & yes\\
        10 & 7 & $2^4 \cdot 41 \cdot 131 \cdot 21493 \cdot 46175611^2 \cdot \ap{14}$ & $[1^4, 2, 1, 3, 2^2, -5]$ & 1726528 & yes\\
        10 & 7 & $2^3 \cdot 37 \cdot 47 \cdot 59 \cdot 102607 \cdot 1503940237^2 \cdot \ap{17}$ &  $[1^3, 6, 6, 1, 1, 5^2, -6]$  & 5516082397305104 & yes\\
        11 & 3 & $2^8 \cdot 797^2 \cdot 1429$ \textdagger& $[1^8, 42^2, -1]$  & 678 & no\\
        12 & 5 & $2^7 \cdot 359 \cdot 883 \cdot 2535977^2 \cdot 6431171736581$ & $[1^7, 18, 1, 1^2, -1]$& 1785328832 & yes\\
        13 & 3 & $2^{10} \cdot 2081^2 \cdot 129083$ & $[1^{10}, 4^2, -1]$ & &\\
        15 & 3 & $2^{12} \cdot 9103^2 \cdot 81847$ & $[1^{12}, 101^2, -2]$ & & yes
    \end{tabular}
    \medskip
    \caption{\label{table:primesquare}Some of the PWN with square odd prime factors that we have found. PWN already listed in \href{https://oeis.org/A273815}{A273815} are marked with \textdagger. For $\Omega=7$, this is the complete list of all the PWN with at least one odd prime factor with exponent greater than one. $\ap{i}$ denotes a prime with $i$ digits. Entries are in lexicographic order of index sequences.}
\end{table}

On the other side we have found many PWN which have two odd prime factors with exponent greater than one, which were not known up to now. One of them is:
\begin{multline*}
2^2 \cdot 13 \cdot 17 \cdot 449 \cdot 24809 \cdot 223797481 \cdot 13437522702621389^2 \cdot \\ 3074438401877924358902212859897^2 \cdot\\ 144038537693729891876284023491399806504775375343886878276167
\end{multline*}
whose index sequence is
\[
[1^2, 2, 1, 2, 1, 1, 1^2, 1^2, -1]
\]
Other PWN with 2 square odd prime factors are given in Table~\ref{table:primesquaremore}. Actually, the last of them has 3 square odd prime factors, so it is likely that there are weird numbers with any number of square odd prime factors, provided $\Omega$ is big enough.

\begin{table}[t]
    \centering
    \small
    \begin{tabular}{|l|Hl|l|H}
        $\boldsymbol{\Omega}$ & $\omega$  & \textbf{factored weird number} & \textbf{index sequence} & $\Delta(w)$ \\
        \hline
        12 & & $w_6  \cdot 13437522702621389^2 \cdot \ap{31}^2 \cdot \ap{60}$ & $ [1^2, 2, 1, 2, 1, 1, 1^2, 1^2, -1]$\\
        12 & & $w_6 \cdot 13437522702621427^2 \cdot \ap{31}^2 \cdot \ap{60}$ & $[1^2, 2, 1, 2, 1, 1, 2^2, 1^2, -3]$\\
        12 & & $w'_6 \cdot 13826118575254057^2 \cdot \ap{32}^2 \cdot \ap{61}$ & $[1^2, 2, 1, 1, 1, 1, 1^2, 1^2, -4]$\\
        13 & & $w'_6 \cdot 13826118575254057^2 \cdot \ap{32} \cdot \ap{61}^2 \cdot \ap{118}$& $[1^2,  2, 1, 1, 1, 1, 1^2, 1, 4^2, -1]$\\
        14 & & $w'_6 \cdot 13826118575254057^2 \cdot \ap{32} \cdot \ap{60}^2 \cdot \ap{118} \cdot \ap{233}$ & $[1^2, 2, 1, 1, 1, 1, 1^2, 2, 1^2, 2, -1]$\\
        15 & & $w'_6 \cdot 13826118575254057 \cdot \ap{32}^2 \cdot \ap{61}^2 \cdot \ap{120}^2 \cdot \ap{237}$ & $[1^2, 2, 1, 1, 1, 1, 1, 1^2, 1^2, 2^2, -1]$
    \end{tabular}
    \medskip
    \caption{\label{table:primesquaremore}Some of the PWN with 2 and 3 square odd prime factors that we have found. Here, $w_6=2^2 \cdot 13 \cdot  17 \cdot 449 \cdot 24809 \cdot 223797481$, $w'_6 = 2^2 \cdot 13 \cdot  17 \cdot  443 \cdot 97919 \cdot 563915543$ and $\ap{i}$ denotes a prime with $i$ digits. Entries are in lexicographic order of index sequences.}
\end{table}

All of the above can be summed up in the following theorem:
\begin{thm}[PWN with non square-free odd part and $\Omega\le 7$]
		\label{thm:patterns}
		There are no PWN $m$ with a quadratic or higher power odd prime factor and $\Omega(m)<7$. There are no PWN $m$ with 2 quadratic odd prime factor and $\Omega(m)=7$. There are no PWN $m$ with a cubic or higher power odd prime factor and $\Omega(m)=7$.
\end{thm}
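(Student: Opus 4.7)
The plan is to reduce the theorem to a finite, exhaustive computation, leveraging the enumeration machinery developed in Section~\ref{sec:abundant-enumerate}. Since a PWN is the same thing as a weird PAN, it suffices to enumerate all PAN $m$ with $\Omega(m)\le 7$ that satisfy each of the three structural constraints, and then verify that none of them is weird, i.e., by Proposition~\ref{prop:weird-abundance} that $\Delta(m)$ is expressible as a sum of distinct proper divisors of $m$.

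First, I would specialize Algorithm~\ref{algo:pan_bigomega} by pruning branches according to the constraint at hand. Concretely, to address the first claim I would enumerate all PAN $m$ of the form $2^{e_0}\cdot p_1^{e_1}\dotsm p_k^{e_k}$ with $p_1<\dots<p_k$ odd, $e_0+\sum e_i \le 6$, and at least one $e_i\ge 2$; for the second claim I would require $\Omega(m)=7$ and at least two of the odd exponents to satisfy $e_i\ge 2$; for the third I would require $\Omega(m)=7$ and at least one $e_i\ge 3$. Completeness of each restricted search follows from Theorem~\ref{thm:complete-nonsquarefree}: the usual stopping rule on the last prime is preserved, and the additional structural constraint only prunes branches that demonstrably cannot be extended to a PAN of the prescribed shape with the remaining budget on $\Omega$. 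The sizes of these restricted search spaces are drastically smaller than the global PAN counts in Table~\ref{tab:pan-count}, which makes the exhaustive enumeration computationally feasible.

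Second, for each candidate PAN $m$ produced by the enumeration, I would run a semiperfection test on $\Delta(m)$: by Proposition~\ref{prop:weird-abundance}, $m$ is weird iff $\Delta(m)$ is \emph{not} a sum of distinct proper divisors of $m$. In practice, the abundance $\Delta(m)$ of the candidates tends to be small relative to the largest prime factor, so a standard subset-sum search on the divisors—restricted to divisors not exceeding $\Delta(m)$—terminates quickly and almost always succeeds, certifying $m$ as semiperfect. If any candidate resisted decomposition, it would become a PWN belonging to Table~\ref{table:primesquare} or would contradict the theorem; the plan is to confirm that the list for $\Omega=7$ in that table exhausts the PWN with a square odd prime factor and $\Omega\le 7$, and contains no element with two square odd primes nor any with an odd prime of exponent $\ge 3$.

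The main obstacle is not the correctness of any single step but the certification of the enumeration: one must be sure that the combined bound $\Omega\le 7$ plus the structural constraint really produces a finite, fully explored search tree, and that the pseudo-prime test used inside the algorithm does not accidentally admit composites as ``primes'' (otherwise the completeness argument of Theorem~\ref{thm:complete-nonsquarefree} could fail on a spurious branch). To handle this I would, as in Section~\ref{sect:square-factors}, re-verify primality of every prime appearing in a putative PAN by a deterministic test (e.g.\ ECPP via Primo), and then the exhaustive semiperfect-decomposition of each candidate finishes the proof.
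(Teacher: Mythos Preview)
Your proposal is correct and mirrors the paper's own approach: the theorem is established computationally by running (an extension of) Algorithm~\ref{algo:pan_bigomega} to exhaustively enumerate the relevant PAN with $\Omega\le 7$ and then checking each for weirdness, exactly as described in Section~\ref{sect:square-factors}. The paper does not give a separate formal proof beyond this enumeration, so your plan coincides with it; your added remarks on pruning, completeness via Theorem~\ref{thm:complete-nonsquarefree}, and deterministic primality re-verification are precisely the certifications the paper relies on.
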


\section{Open problems}\label{sec:open}

	By examining Tables~\ref{table:primesquare} and~\ref{table:primesquaremore}, together with other weird numbers found by our search procedure and which may be found on-line, we observe some facts which can be useful for further experiments.
	
	First of all, there are some prefixes in the factorization which occur in many PWN. One of this recurring prefix is $2^2 \cdot 13 \cdot  17 \cdot  443 \cdot 97919$, which also leads to many PWN with 2 or more square odd prime factors.
	PWN with 2 square odd prime factors begin to appear in the results of the search procedure when $\Omega=12$, and become quite common when $\Omega=14$. It seems that increasing $\Omega$ makes the appearance of this kind of PWN easier. Since our search space is quite restricted, there are probably PWN with 2 square odd prime factors even for $\Omega < 12$, but we think they are quite rare.
	The same thing may be said about PWN with 3 square odd prime factors, which only appear with $\Omega=15$. Unfortunately, with $\Omega>15$ the numbers become huge (thousands of digits) and this makes experiments much more difficult.
	
	\begin{openq}
		For each $n\in\NN$, find a PWN with exactly $n$ square odd prime factors and the least $\Omega=\Omega_n$. From the previous section and Theorem~\ref{thm:patterns} we obtain $\Omega_1=7$, $8\le\Omega_2\le 12$, $8\le\Omega_3\le 15$, and in general if $n\ge 2$ we have $\Omega_n\ge 8$. 
	\end{openq}
	As mentioned, another question is the following.
	\begin{openq}
		Find a PWN with a cubic or higher power odd prime factor.
	\end{openq}
	
	From the experiments, odd square prime factors seems more common at the right end of the factorization, although in our search results they never appear in the last position.
	\begin{openq}
		Find a PWN which has its largest prime factor squared or to a higher power.
	\end{openq}
	
	On OEIS \href{https://oeis.org/A002975}{A002975} it was asked if the following fact is true: a weird number is primitive iff divided by its largest prime factor it is not weird. The following would be a counterexample.
	\begin{openq}
		Find a weird number $w$ which is not primitive and such that $w/(\text{largest prime factor})$ is not weird.
	\end{openq}

	The following problem appears as an editor's comment in~\cite{BenPSS}. Erd\H os offered 25\$ for its solution.	
	\begin{openq}
		Is $\sigma(m)/m$ bounded when $m$ ranges through the set of (not necessarily primitive) weird numbers?
	\end{openq}

	Finally, the following would settle a long-standing problem.
	\begin{openq}
	\item Find an odd weird number, or prove that all weird numbers are even.
	\end{openq}
	The above problem was raised by Erd\H os, that offered 10\$\ for an example of an odd weird number, and 25\$ for a proof that none can exist~\cite{BenPSS}. Wenjie Fang and Uwe Beckert proved, using parallel tree search, that there are no odd weird numbers up to $10^{21}$, and no odd weird numbers up to $10^{28}$ with abundance not exceeding $10^{14}$~\cite[Section 4.2]{FaBPTS}.

\emph{Acknowledgements.} We would like to thank Vincenzo Acciaro, Adam Atkinson, Rosa Gini, Francesca Scozzari, Agnese Telloni for their valuable discussions with us.
The second and third authors wish to thank the first and fourth authors for invitation and hospitality at Chieti-Pescara University, where significant parts of this work have been done.



\bibliographystyle{alpha}

\end{document}